\newtheorem{theorem}{Theorem}[section]
\newtheorem{lemma}[theorem]{Lemma}
\newtheorem{corollary}[theorem]{Corollary}
\theoremstyle{definition}
\theoremstyle{remark}
\newtheorem{remark}[theorem]{Remark}
\numberwithin{equation}{section}
\DeclareMathOperator{\tr}{tr}
\DeclareMathOperator{\Ric}{Ric}
\DeclareMathOperator{\can}{can}
\DeclareMathOperator{\II}{II}
\DeclareMathOperator{\ind}{ind}
\DeclareMathOperator{\Aut}{Aut}
\DeclareMathOperator{\diver}{div}
\DeclareMathOperator{\diam}{diam}
\DeclareMathOperator{\area}{area}
\newcommand{\R}{\mathbb{R}}
\newcommand{\C}{\mathbb{C}}
\newcommand{\D}{\mathbb{D}}
\newcommand{\s}{\mathbb{S}}
\newcommand{\I}{\mathcal{I}}
\begin{document}


\title[Rigidity of Free Boundary Surfaces in Compact 3-Manifolds]{Rigidity of Free Boundary Surfaces in Compact $3$-Manifolds with Strictly Convex Boundary}

\author{Abraão Mendes}
\address{Instituto de Matemática, Universidade Federal de Alagoas, Maceió, Alagoas, Brazil}
\email{abraao.rego@im.ufal.br}
\thanks{This work was carried out while the author was a Visiting Graduate Student at Princeton University during the 2015-2016 academic year. He was partially supported by NSF grant DMS-1104592 and by the CAPES Foundation, Ministry of Education of Brazil. He would like to express his gratitude to his Ph.D. advisors Fernando Codá Marques, at Princeton University, and Marcos Petrúcio Cavalcante, at UFAL. He also would like to thank the referee for the comments and suggestions.}

\date{\today}

\begin{abstract}
In this paper we obtain an analogue of Toponogov theorem in dimension 3 for compact manifolds $M^3$ with nonnegative Ricci curvature and strictly convex boundary $\partial M$. Here we obtain a sharp upper bound for the length $L(\partial\Sigma)$ of the boundary $\partial\Sigma$ of a free boundary minimal surface $\Sigma^2$ in $M^3$ in terms of the genus of $\Sigma$ and the number of connected components of $\partial\Sigma$, assuming $\Sigma$ has index one. After, under a natural hypothesis on the geometry of $M$ along $\partial M$, we prove that if $L(\partial\Sigma)$ saturates the respective upper bound, then $M^3$ is isometric to the Euclidean 3-ball and $\Sigma^2$ is isometric to the Euclidean disk. In particular, we get a sharp upper bound for the area of $\Sigma$, when $M^3$ is a strictly convex body in $\R^3$, which is saturated only on the Euclidean 3-balls (by the Euclidean disks). We also consider similar results for free boundary stable CMC surfaces.
\end{abstract}

\maketitle

\section{Introduction}

A classical result due to Toponogov \cite{Toponogov} (see \cite{HangWang} for an alternative proof) says that if $M^2$ is a closed Riemannian surface with Gaussian curvature $K\ge1$, then the length of any closed simple geodesic $\gamma\subset M^2$ satisfies $L(\gamma)\le2\pi$. Furthermore, if $L(\gamma)=2\pi$, then $M^2$ is isometric to the standard unit 2-sphere $\s^2$.

In order to obtain a version of Toponogov theorem in dimension $3$, Bray, Brendle, Eichmair, and Neves \cite{BrayBrendleEichmairNeves} considered a real projective plane $\Sigma^2$ embedded into a compact Riemannian $3$-manifold $M^3$. They proved that if $\Sigma$ has least area among all real projective planes embedded into $M$, and $M$ has scalar curvature $R\ge6$, then the area of $\Sigma$ satisfies $A(\Sigma)\le2\pi$. Moreover, if $A(\Sigma)=2\pi$, then $M^3$ is isometric to $\R\mathbb{P}^3$ endowed with the canonical metric.

A few months later, Bray, Brendle, and Neves \cite{BrayBrendleNeves} considered the infimum of all homotopically non-trivial 2-spheres in a compact Riemannian 3-manifold $(M^3,g)$ with $\pi_2(M)\neq0$. In fact, if $\mathcal F$ denotes the set of all smooth maps $f:S^2\to M$ which represent a non-trivial element of $\pi_2(M)$ and $$\mathcal A(M,g):=\inf\{\area(S^2,f^*g):f\in\mathcal F\},$$ they proved that $$\mathcal A(M,g)\inf_MR\le8\pi,$$ where $R$ is the scalar curvature of $(M,g)$. Furthermore, if equality holds, then the universal cover of $(M^3,g)$ is isometric to $\R\times\s^2$ up to scaling. See \cite{CaiGalloway} and \cite{Nunes} for similar results.

In a more recent work, Marques and Neves \cite{MarquesNeves} considered the case of unstable minimal $2$-spheres. Among other things, they proved that if $\langle\,,\,\rangle$ is a Riemannian metric on $S^3$ with scalar curvature $R\ge6$, but $\langle\,,\,\rangle$ does not have constant sectional curvature one, then there exists a minimal $2$-sphere $\Sigma^2$ embedded into $M^3=(S^3,\langle\,,\,\rangle)$ satisfying $A(\Sigma)<4\pi$. Also, $\Sigma$ has index zero or one. This can be seen as an analogue of Toponogov theorem in dimension $3$, since, in general, there is no area bound for minimal $2$-spheres in $M^3$, as pointed out in \cite{MarquesNeves}.

Our goal in this work is to obtain a version of Toponogov theorem in dimension $3$ for compact manifolds with nonempty boundary. Before stating our results, let us remember an important result in the setting.

Let $M^3$ be a compact Riemannian $3$-manifold with nonempty boundary $\partial M$. Denote by $\mathcal F_M$ the set of all immersed disks in $M$ whose boundaries are homotopically nontrivial curves in $\partial M$. If $\mathcal F_M\neq\emptyset$, define
\begin{eqnarray*}
\mathcal A(M)=\inf_{\Sigma\in\mathcal F_M}A(\Sigma)\,\,\,\mbox{and}\,\,\,\mathcal L(M)=\inf_{\Sigma\in\mathcal F_M}L(\partial\Sigma).
\end{eqnarray*}

\begin{theorem}[Ambrozio, \cite{Ambrozio}]\label{theorem.Ambrozio}
Let $M^3$ be a compact Riemannian $3$-manifold with nonempty boundary $\partial M$. Assume that $\partial M$ is mean convex and $\mathcal F_M\neq\emptyset$. Then, 
\begin{eqnarray*}
\frac{1}{2}\mathcal A(M)\inf_MR+\mathcal L(M)\inf_{\partial M}H^{\partial M}\le2\pi,
\end{eqnarray*} 
where $R$ is the scalar curvature of $M$ and $H^{\partial M}$ is the mean curvature of $\partial M$. Furthermore, if equality holds, the universal cover of $M$ is isometric to $\R\times\Sigma_0$, where $\Sigma_0$ is the disk with constant Gaussian curvature $\inf_MR/2$ whose boundary $\partial\Sigma_0$ has constant geodesic curvature $\inf_{\partial M}H$.  
\end{theorem}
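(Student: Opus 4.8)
The plan is to split the argument into a minimization step that produces the test surface, a single calculation with the constant test function that yields the inequality, and a rigidity analysis for the equality case.

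First I would produce the surface on which everything is tested. Minimizing area over the class $\mathcal F_M$ of disks whose boundaries are homotopically nontrivial in $\partial M$, I expect to obtain a smooth, embedded, stable free boundary minimal disk $\Sigma$ realizing $\mathcal A(M)=A(\Sigma)$. The mean convexity of $\partial M$ is exactly the barrier hypothesis that prevents a minimizing sequence from sliding off or becoming tangent to $\partial M$, forcing $\Sigma$ to meet $\partial M$ orthogonally along $\partial\Sigma$; nontriviality of the boundary homotopy class rules out degeneration to a point. This is a free boundary Plateau/Meeks--Yau type existence statement. Being area-minimizing, $\Sigma$ is in particular stable, so its second variation form
$$Q(f,f)=\int_\Sigma\big(|\nabla f|^2-(\Ric(N,N)+|A|^2)f^2\big)\,dA-\int_{\partial\Sigma}\II(N,N)f^2\,ds$$
is nonnegative for every $f$, where $N$ is the unit normal of $\Sigma$ and $\II$ is the second fundamental form of $\partial M$.

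Next I would insert the constant test function $f\equiv1$. Along $\Sigma$, the Gauss equation together with minimality ($\det A=-\tfrac12|A|^2$) rewrites the bulk integrand as $\Ric(N,N)+|A|^2=\tfrac12 R-K_\Sigma+\tfrac12|A|^2$, where $K_\Sigma$ is the intrinsic Gaussian curvature. Along $\partial\Sigma$, the free boundary condition makes $N$ tangent to $\partial M$ and the conormal $\nu$ equal to the outward normal of $M$, so that $\II(N,N)=H^{\partial M}-k_g$, where $k_g=\II(T,T)$ is the geodesic curvature of $\partial\Sigma$ in $\Sigma$. Feeding these identities into $0\le Q(1,1)$ and using Gauss--Bonnet $\int_\Sigma K_\Sigma\,dA+\int_{\partial\Sigma}k_g\,ds=2\pi\chi(\Sigma)=2\pi$ (with $\chi=1$ since $\Sigma$ is a disk), the curvature terms collapse and I obtain
$$\tfrac12\int_\Sigma R\,dA+\tfrac12\int_\Sigma|A|^2\,dA+\int_{\partial\Sigma}H^{\partial M}\,ds\le2\pi.$$
Discarding $\tfrac12\int_\Sigma|A|^2\ge0$ and estimating $\int_\Sigma R\ge A(\Sigma)\inf_MR$ and $\int_{\partial\Sigma}H^{\partial M}\ge L(\partial\Sigma)\inf_{\partial M}H^{\partial M}$, then using $A(\Sigma)=\mathcal A(M)$ and $L(\partial\Sigma)\ge\mathcal L(M)$ (legitimate since $\inf_{\partial M}H^{\partial M}\ge0$), I reach the claimed inequality.

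For rigidity I would trace equality back through every estimate. Equality forces $|A|\equiv0$ (so $\Sigma$ is totally geodesic), $R\equiv\inf_MR$ on $\Sigma$, and $H^{\partial M}\equiv\inf_{\partial M}H^{\partial M}$ on $\partial\Sigma$; moreover $Q(1,1)=0$ makes $f\equiv1$ a first eigenfunction with eigenvalue zero, whose Jacobi equation and Robin boundary condition give $\Ric(N,N)+|A|^2=0$ on $\Sigma$ and $\II(N,N)=0$ on $\partial\Sigma$. Hence $K_\Sigma=\tfrac12 R-\Ric(N,N)=\tfrac12\inf_MR$ is constant and $k_g=H^{\partial M}=\inf_{\partial M}H^{\partial M}$ is constant, identifying $\Sigma$ with the model disk $\Sigma_0$. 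To upgrade this infinitesimal rigidity to the global splitting, I would use that $f\equiv1$ is a nontrivial Jacobi field to construct, via a free boundary implicit function theorem, a one-parameter foliation $\{\Sigma_t\}$ of a neighborhood of $\Sigma$ by free boundary constant mean curvature surfaces meeting $\partial M$ orthogonally. A continuity/minimality argument shows each leaf again saturates the inequality, hence is totally geodesic with the same constant data and constant lapse, which forces the ambient metric to be the product $\R\times\Sigma_0$ near $\Sigma$; a standard continuation argument then propagates this to the universal cover. The hard part will be this rigidity step: building the foliation compatibly with the orthogonality condition and controlling the lapse up to $\partial M$, since the closed-manifold foliation argument must be redone with the Robin/Neumann-type boundary behavior, and one must check that every nearby leaf still realizes equality before concluding the local isometric splitting.
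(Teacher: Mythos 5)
The paper states this result purely as background, quoting it from \cite{Ambrozio} without giving any proof, so the only meaningful comparison is with Ambrozio's original argument. Your proposal reconstructs essentially that same argument---existence of an embedded area-minimizing free boundary disk via the Meeks--Yau/free boundary Plateau theory under mean convexity, the stability inequality with test function $f\equiv1$ rewritten through the Gauss equation, the identity $\II(N,N)=H^{\partial M}-k_g$ along $\partial\Sigma$, Gauss--Bonnet, the observation that $L(\partial\Sigma)\inf_{\partial M}H^{\partial M}\ge\mathcal L(M)\inf_{\partial M}H^{\partial M}$ requires mean convexity, and rigidity via a foliation by free boundary CMC surfaces in the style of Bray--Brendle--Neves---so it is correct in outline and takes the same route as the cited proof.
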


\begin{remark}
An immediate consequence of Ambrozio theorem is that if $\inf_MR=0$, $\inf_{\partial M}H^{\partial M}=1$, and $\mathcal L(M)=2\pi$, then the universal cover of $M$ is isometric to $\R\times\bar\D$, where $\bar\D$ is the unit disk in $\R^2$ endowed with the canonical metric. 
\end{remark}

Observe that Ambrozio's result is an analogue of Bray-Brendle-Neves theorem for $3$-manifolds with nonempty boundary. Motivated by Marques and Neves' work \cite{MarquesNeves}, we consider the case of unstable minimal surfaces in $3$-manifolds with nonempty boundary. 

Now, let us state our first result. Definitions will be given in Section \ref{Section.2}.

\begin{theorem}[Theorem \ref{theorem.1}]\label{theorem.1.3}
Let $M^3$ be a compact Riemannian 3-manifold with nonempty boundary $\partial M$. Suppose that $\Ric\ge0$ and $\II\ge1$,  where $\Ric$ is the Ricci tensor of $M$ and $\II$ is the second fundamental form of $\partial M$. If $\Sigma^2$ is a properly embedded free boundary minimal surface of index one in $M^3$, then the length of $\partial\Sigma$ satisfies
\begin{eqnarray}\label{eq.theorem.1.3}
L(\partial\Sigma)\le2\pi(g+r),
\end{eqnarray}
where $g$ is the genus of $\Sigma$ and $r$ is the number of connected components of $\partial\Sigma$. Moreover, if equality holds, we have:
\begin{enumerate}
\item[\rm (i)] $\Sigma$ (w.r.t. the induced metric from $M$) is isometric to the Euclidean unit disk $\bar\D$;
\item[\rm (ii)] $\partial\Sigma$ is a geodesic of $\partial M$;
\item[\rm (iii)] $\Sigma$ is totally geodesic in $M$; and
\item[\rm (iv)] all sectional curvatures of $M$ vanish on $\Sigma$.
\end{enumerate}
\end{theorem}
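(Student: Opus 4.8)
The plan is to feed an optimal conformal branched covering of the disk into the second variation through a Hersch-type balancing, and then to read off the rigidity from Gauss--Bonnet.

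First I would fix a conformal structure on $\Sigma$ by uniformization and invoke Gabard's theorem: a compact Riemann surface of genus $g$ with $r\geq 1$ boundary components admits a proper holomorphic (branched) map $\varphi=(\varphi_1,\varphi_2)\colon\Sigma\to\bar\D$ of degree $n\leq g+r$ with $\varphi(\partial\Sigma)\subset\partial\D$. Since the Dirichlet energy is a conformal invariant in dimension two and $\varphi$ is conformal, a degree-$n$ branched cover of $\bar\D$ has image area $n\pi$ counted with multiplicity, so $\int_\Sigma|\nabla\varphi|^2\,dA=2n\pi\leq 2\pi(g+r)$. The components $\varphi_i$ are the intended test functions for the index form
\[
Q(\phi,\phi)=\int_\Sigma\big(|\nabla\phi|^2-(|A|^2+\Ric(N,N))\phi^2\big)\,dA-\int_{\partial\Sigma}\II(N,N)\,\phi^2\,ds,
\]
where $N$ is the unit normal of $\Sigma$ (tangent to $\partial M$ along $\partial\Sigma$ by the free boundary condition) and $A$ is the second fundamental form of $\Sigma$.

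The index-one hypothesis means $Q$ has a single negative eigenvalue, with a positive first eigenfunction $u_1$, so $Q\geq 0$ on the $L^2(\Sigma)$-orthogonal complement of $u_1$. Since $u_1\,dA$ is a positive finite measure, a Hersch balancing argument produces a conformal automorphism $G$ of $\bar\D$ such that $\psi:=G\circ\varphi$ satisfies $\int_\Sigma\psi_i\,u_1\,dA=0$ for $i=1,2$; here $\psi$ is again a proper holomorphic branched cover of the same degree, so its energy is unchanged and $|\psi|^2\equiv1$ on $\partial\Sigma$. Hence $Q(\psi_i,\psi_i)\geq0$. Summing over $i$ and using $|A|^2\geq0$, $\Ric(N,N)\geq0$, and $\II(N,N)\geq1$ (valid since $N$ is a unit vector tangent to $\partial M$), I obtain
\[
0\leq\sum_{i}Q(\psi_i,\psi_i)\leq\int_\Sigma|\nabla\psi|^2\,dA-\int_{\partial\Sigma}|\psi|^2\,ds\leq 2\pi(g+r)-L(\partial\Sigma),
\]
which is exactly \eqref{eq.theorem.1.3}.

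For the rigidity, equality forces every inequality above to be tight. From $\int_\Sigma(|A|^2+\Ric(N,N))|\psi|^2\,dA=0$ and the fact that $|\psi|^2$ vanishes only at finitely many points, I conclude $A\equiv0$ and $\Ric(N,N)\equiv0$ on $\Sigma$; the first gives (iii), and since the $N$-component of $\nabla_TT$ along $\partial\Sigma$ equals $A(T,T)=0$, the boundary $\partial\Sigma$ is a geodesic of $\partial M$, giving (ii). Totally geodesic plus the Gauss equation give $K_\Sigma=\sec_M(T\Sigma)$, and in a $3$-manifold $2K_\Sigma=R-2\Ric(N,N)=R\geq0$ using $\Ric\geq0$ and $\Ric(N,N)=0$; thus $K_\Sigma\geq0$. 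The geodesic curvature of $\partial\Sigma$ in $\Sigma$ equals $\II(T,T)\geq1$ by the free boundary condition and $\II\geq1$, so Gauss--Bonnet yields
\[
L(\partial\Sigma)\leq\int_{\partial\Sigma}\II(T,T)\,ds=2\pi\chi(\Sigma)-\int_\Sigma K_\Sigma\,dA\leq 2\pi(2-2g-r).
\]
Comparing with $L(\partial\Sigma)=2\pi(g+r)$ forces $3g+2r\leq2$, hence $g=0$, $r=1$, and all these inequalities become equalities: $K_\Sigma\equiv0$ and $\II(T,T)\equiv1$. A flat, simply connected surface whose geodesic boundary has constant curvature $1$ and length $2\pi$ is isometric to the Euclidean unit disk, giving (i). Finally $K_\Sigma\equiv0$ forces $R\equiv0$ on $\Sigma$, so $\Ric$ vanishes on $T\Sigma$; together with $\Ric(N,N)=0$ and $\Ric\geq0$ (which places $N$ in the kernel of $\Ric$) the full Ricci tensor vanishes on $\Sigma$, and since in dimension three the curvature tensor is determined by $\Ric$, all sectional curvatures of $M$ vanish on $\Sigma$, giving (iv). The main obstacle I anticipate is the balancing step: one must invoke Gabard's sharp degree bound and then run Hersch's center-of-mass argument with the weight $u_1\,dA$, checking that the branched cover yields admissible $H^1$ test functions orthogonal to $u_1$; the curvature bookkeeping in the equality case is routine once $A\equiv0$ and $\Ric(N,N)\equiv0$ are in hand.
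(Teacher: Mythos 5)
Your proposal is correct and follows essentially the same route as the paper: Gabard's degree-bounded proper conformal branched cover, Hersch-type balancing against the first eigenfunction of the index form, the coordinate components as test functions in the second variation, and then Gauss--Bonnet for the rigidity. The only differences are cosmetic: the paper proves the balancing lemma separately in an appendix (via a degree argument using that $F$ maps the interior of $\Sigma$ into the open disk), and your justification of (iv) --- that $\Ric\ge0$ with vanishing trace forces $\Ric=0$, hence vanishing curvature in dimension three --- is if anything slightly more explicit than the paper's.
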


In \cite{FraserSchoen}, Fraser and Schoen proved that if $\Sigma^2$ is a compact orientable surface with nonempty boundary, then $\sigma_1(\Sigma)L(\partial\Sigma)\le2\pi(g+r)$, where $\sigma_1(\Sigma)$ is the first nonzero Steklov eigenvalue of $\Sigma$. On the other hand, Fraser and Li \cite{FraserLi} proved that if $\Ric\ge0$, $\II\ge1$, and $\Sigma^2$ is a properly embedded minimal surface in $M^3$ with free boundary in $\partial M$, then $\sigma_1(\Sigma)\ge1/2$. As a corollary, they obtained that $L(\partial\Sigma)\le4\pi(g+r)$. However, this bound is not sharp. Thus, Theorem \ref{theorem.1.3} is an improvement of Fraser and Li's result to a sharp upper bound when we assume that $\Sigma$ has index one.

If we make an extra assumption on the geometry of $M$ along $\partial M$, we can characterize the global geometry of $M$ when equality in (\ref{eq.theorem.1.3}) holds.

\begin{theorem}[Corollary \ref{corollary.1}]\label{theorem.1.4}
Let $M^3$ be a compact Riemannian 3-manifold with nonempty boundary $\partial M$. Suppose that $\Ric\ge0$, $\II\ge1$, and $K_M(T_p\partial M)\ge0$ for all $p\in\partial M$, where $K_M$ is the sectional curvature of $M$. If $\Sigma^2$ is a properly embedded free boundary minimal surface of index one in $M^3$, then the length of $\partial\Sigma$ satisfies
\begin{eqnarray*}
L(\partial\Sigma)\le2\pi(g+r).
\end{eqnarray*}
Furthermore, if equality holds, $M^3$ is isometric to the Euclidean unit 3-ball $\bar B^3$ and $\Sigma^2$ is isometric to the Euclidean unit disk $\bar\D$.
\end{theorem}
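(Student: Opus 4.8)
The asserted inequality is already the content of Theorem \ref{theorem.1.3}, since the hypotheses here ($\Ric\ge0$, $\II\ge1$, and $K_M(T_p\partial M)\ge0$) include those of that theorem; so the whole task is the equality case. My plan is to first push the rigidity statements (i)--(iv) of Theorem \ref{theorem.1.3} onto the boundary surface $\partial M$, then apply the two-dimensional Toponogov theorem recalled in the Introduction \cite{Toponogov} to recognize a component of $\partial M$ as a round unit sphere, and finally use comparison geometry to upgrade this to the global conclusion $M\cong\bar B^3$.

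Assume equality. Theorem \ref{theorem.1.3} gives that $\Sigma$ is isometric to $\bar\D$; in particular $g=0$ and $r=1$, so $\partial\Sigma$ is a single simple closed curve of length $2\pi$, and by (ii) it is a geodesic of $\partial M$. I would next invoke the Gauss equation for $\partial M\subset M$: if $\kappa_1,\kappa_2$ denote the principal curvatures of $\partial M$, then
\begin{equation*}
K_{\partial M}=K_M(T_p\partial M)+\kappa_1\kappa_2\ge 0+1=1,
\end{equation*}
using $\II\ge1$ (hence $\kappa_i\ge1$) together with the extra hypothesis $K_M(T_p\partial M)\ge0$. Thus every component of $\partial M$ has Gaussian curvature at least $1$. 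Let $\partial M_0$ be the component containing $\partial\Sigma$. Since $\partial\Sigma\subset\partial M_0$ is a simple closed geodesic of length exactly $2\pi$ and $K_{\partial M_0}\ge1$, the equality case of Toponogov's theorem forces $\partial M_0$ to be isometric to the round unit sphere $\s^2$.

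Consequently $K_{\partial M}\equiv1$ on $\partial M_0$, so the Gauss estimate is saturated there: $K_M(T_p\partial M)=0$ and $\kappa_1\kappa_2=1$, which with $\kappa_1,\kappa_2\ge1$ yields $\kappa_1=\kappa_2=1$; that is, $\partial M_0$ is totally umbilic with shape operator the identity. It remains to show that a compact manifold with $\Ric\ge0$ whose boundary contains such a round, umbilic unit sphere is the flat unit ball. Here I would study the inward normal exponential map of $\partial M_0$: the shape operators of the equidistant hypersurfaces obey a Riccati equation with initial value $\II=\mathrm{Id}$, and since $\Ric\ge0$ the trace (Ricci) comparison with the Euclidean model, in which the level sets are concentric spheres collapsing to the center at distance $1$, forces a focal point along every inward normal geodesic no later than distance $1$; hence $\mathrm{inrad}(M)\le1$. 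On the other hand, the flat totally geodesic disk $\Sigma$ provides, through its center, geodesics of length $1$ that meet $\partial M_0$ orthogonally and, by (iii), are geodesics of $M$. Verifying that these realize the distance to $\partial M$, so that $\mathrm{inrad}(M)=1$, and then propagating the resulting equality, is the heart of the matter. I expect this rigidity step to be the main obstacle: one must argue that equality in the Ricci/Riccati comparison forces the radial sectional curvatures to vanish and, combined with the vanishing of all sectional curvatures on $\Sigma$ from (iv) and an open--closed argument, that $K_M\equiv0$ and the normal exponential map realizes the Euclidean model on all of $M$. This simultaneously shows $\partial M=\partial M_0$ and produces the isometries $M\cong\bar B^3$, $\Sigma\cong\bar\D$. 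A possible cleaner substitute for this last step is a Reilly/Heintze--Karcher argument applied to the solution of $\Delta f=1$ with $f|_{\partial M}=0$, where the precise geometry of $\partial M_0$ saturates the Heintze--Karcher inequality and forces $\nabla^2 f=\frac{1}{3}\,g$, again yielding the flat ball.
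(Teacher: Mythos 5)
Your argument tracks the paper's proof exactly through its first half: the inequality is quoted from Theorem \ref{theorem.1.3}, the Gauss equation $K_{\partial M}=K_M(T_p\partial M)+\kappa_1\kappa_2\ge1$ is used in the same way, and the equality case of Toponogov's theorem applied to the simple closed geodesic $\partial\Sigma$ of length $2\pi$ identifies the boundary (component) as the round unit sphere $\s^2$. (In fact $\partial M$ is connected here: under $\Ric\ge0$ and $\II>0$, the paper notes in Section \ref{Section.2} that $M$ is diffeomorphic to $\bar B^3$ by Fraser--Li, so there is only one component to worry about.)

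After that point, however, there is a genuine gap. The step you yourself flag as ``the heart of the matter'' --- passing from ``$\partial M$ is the round unit sphere, $\Ric\ge0$, $\II\ge1$'' to ``$M$ is isometric to $\bar B^3$'' --- is precisely the step you do not prove. Your Riccati/focal-point sketch only yields $\mathrm{inrad}(M)\le1$ and a candidate family of radial geodesics; equality in a trace-level (Ricci) comparison along some geodesics does not by itself force vanishing of radial sectional curvatures everywhere, and the proposed ``open--closed'' propagation is not an argument yet. The Reilly/Heintze--Karcher alternative you mention is indeed the engine behind such rigidity results (it is essentially how Hang--Wang and Xia prove theirs), but as written it is a plan, not a proof. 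The paper closes this gap with one citation: Xia's theorem, stated immediately before the corollary, says that under $\Ric\ge0$ and $\II\ge c>0$ one has $\lambda_1(\partial M)\ge nc^2$ with equality if and only if $M$ is the Euclidean ball of radius $1/c$. Once $\partial M$ is isometric to the unit $\s^2$, its first nonzero Laplace eigenvalue equals $2=nc^2$ (with $n=2$, $c=1$), so equality holds and Xia's rigidity gives $M\cong\bar B^3$ at once. Replacing your unfinished comparison argument with this appeal to Xia's theorem (or to the Hang--Wang sphere-boundary rigidity theorem) would complete your proof along the paper's lines.
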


Using Theorem \ref{theorem.1.4} together with the isoperimetric inequality for minimal disks in $\R^3$ (see \cite{BarbosadoCarmo} for the general case), we have a sharp upper bound for the area of a properly embedded free boundary minimal disk of index one in a strictly convex domain in $\R^3$.

\begin{theorem}[Corollary \ref{corollary.2}]\label{theorem.1.5}
Let $\Omega$ be a smooth bounded domain in $\R^3$ whose boundary $\partial\Omega$ is strictly convex, say $\II\ge 1$, where $\II$ is the second fundamental form of $\partial\Omega$ in $\R^3$. If $\Sigma^2$ is a properly embedded free boundary minimal disk of index one in $\Omega$, then the area of $\Sigma$ satisfies 
\begin{eqnarray*}
A(\Sigma)\le\pi.
\end{eqnarray*} 
Moreover, if equality holds, $\Omega$ is the Euclidean unit 3-ball and $\Sigma^2$ is the Euclidean unit disk. 
\end{theorem}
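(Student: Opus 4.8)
The plan is to deduce this statement directly from Theorem \ref{theorem.1.4} combined with the classical isoperimetric inequality for minimal disks in $\R^3$; all the genuine geometric work has already been done upstream, so what remains is essentially a packaging argument. First I would verify that a strictly convex domain $\Omega\subset\R^3$ satisfies every hypothesis of Theorem \ref{theorem.1.4}. Since $\R^3$ is flat, its Ricci tensor vanishes identically, so $\Ric\ge0$; likewise all sectional curvatures of $\Omega$ are zero, so in particular $K_M(T_p\partial\Omega)=0\ge0$ for every $p\in\partial\Omega$; and the strict convexity assumption $\II\ge1$ supplies the remaining hypothesis. Because $\Sigma$ is a disk, its genus is $g=0$ and its boundary $\partial\Sigma$ has $r=1$ connected component. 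Hence Theorem \ref{theorem.1.4} applies and yields
\[
L(\partial\Sigma)\le2\pi(g+r)=2\pi.
\]

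Next I would invoke the isoperimetric inequality for minimal disks in $\R^3$ (Carleman's inequality; see \cite{BarbosadoCarmo} for the general case), namely that a minimal disk $\Sigma$ immersed in $\R^3$ satisfies $A(\Sigma)\le\frac{1}{4\pi}L(\partial\Sigma)^2$. Chaining this with the length bound from the previous step gives
\[
A(\Sigma)\le\frac{1}{4\pi}L(\partial\Sigma)^2\le\frac{1}{4\pi}(2\pi)^2=\pi,
\]
which is the claimed area estimate. It is worth stressing that this step uses only that $\Sigma$ is a minimal disk in Euclidean space: the free boundary condition plays no role in the isoperimetric inequality itself and enters the argument solely through Theorem \ref{theorem.1.4}.

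Finally, for the rigidity statement I would suppose $A(\Sigma)=\pi$ and run the equality analysis. Equality at the end of the displayed chain forces equality throughout; in particular $A(\Sigma)\le\frac{1}{4\pi}L(\partial\Sigma)^2$ together with $L(\partial\Sigma)\le2\pi$ yields $L(\partial\Sigma)^2\ge4\pi A(\Sigma)=4\pi^2$ and $L(\partial\Sigma)\le2\pi$, hence $L(\partial\Sigma)=2\pi$. Thus equality is attained in (\ref{eq.theorem.1.3}) for the disk $\Sigma$, and the rigidity clause of Theorem \ref{theorem.1.4} forces $\Omega$ to be isometric to the Euclidean unit $3$-ball $\bar B^3$ and $\Sigma$ to be isometric to the Euclidean unit disk $\bar\D$. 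Since $\Omega$ already carries the flat metric induced from $\R^3$, this isometry identifies $\Omega$ with a round unit ball (up to a rigid motion) and $\Sigma$ with a flat unit disk. The only point requiring genuine care here is confirming that the isoperimetric inequality is applicable to the embedded minimal disk under consideration and that the equality logic propagates $A(\Sigma)=\pi$ back to $L(\partial\Sigma)=2\pi$; the substantive difficulty of the whole circle of ideas lives in Theorem \ref{theorem.1.3} and Theorem \ref{theorem.1.4} rather than in this corollary.
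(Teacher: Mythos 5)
Your proposal is correct and follows essentially the same route as the paper: the length bound $L(\partial\Sigma)\le2\pi$ from the upstream theorem, combined with the isoperimetric inequality $4\pi A(\Sigma)\le L(\partial\Sigma)^2$ for minimal disks, and the equality case propagated back to $L(\partial\Sigma)=2\pi$ so that the rigidity clause of Corollary \ref{corollary.1} applies. The only cosmetic difference is that you invoke Theorem \ref{theorem.1.4} for the length bound where the paper cites Theorem \ref{theorem.1}, which is immaterial since the flat metric of $\R^3$ satisfies the hypotheses of both.
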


For the general case of a free boundary minimal surface (not necessarily a disk) of index one in a strictly convex domain $\Omega$ in $\R^3$, we introduce the constant 
\begin{eqnarray*}
\mathcal{R}(\Omega)=\inf_{y\in\Omega}\sup_{x\in\partial\Omega}|x-y|,
\end{eqnarray*}
and have the following result.

\begin{theorem}[Corollary \ref{corollary.3}]\label{theorem.1.6}
Let $\Omega$ be a smooth bounded domain in $\R^3$ whose boundary $\partial\Omega$ is strictly convex, say $\II\ge 1$. If $\Sigma^2$ is a properly embedded free boundary minimal surface of index one in $\Omega$, then the area of $\Sigma$ satisfies 
\begin{eqnarray*}
A(\Sigma)\le\pi(g+r)\mathcal{R}(\Omega).
\end{eqnarray*} 
Moreover, if equality holds, $\Omega$ is the Euclidean unit 3-ball and $\Sigma^2$ is the Euclidean unit disk. 
\end{theorem}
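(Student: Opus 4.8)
The plan is to bound the area directly in terms of the boundary length by means of the Euclidean position vector field, and then to feed in the length estimate already established in Theorem \ref{theorem.1.4}. First I would fix the Chebyshev center: since $\Omega$ is bounded and strictly convex, the continuous function $y\mapsto\sup_{x\in\partial\Omega}|x-y|$ attains its minimum at a unique point $y\in\bar\Omega$ with $\sup_{x\in\partial\Omega}|x-y|=\mathcal{R}(\Omega)$. Consider $f(x)=|x-y|^2$ restricted to $\Sigma$. Because $\Sigma$ is minimal in $\R^3$ its mean curvature vector vanishes, and since the ambient Hessian of $f$ is twice the identity one obtains $\Delta_\Sigma f=2\dim\Sigma=4$. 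Integrating over $\Sigma$ and applying the divergence theorem on the surface with boundary gives
\begin{equation*}
4\,A(\Sigma)=\int_\Sigma\Delta_\Sigma f\,dA=\int_{\partial\Sigma}\langle\nabla_\Sigma f,\nu\rangle\,ds=2\int_{\partial\Sigma}\langle x-y,\nu\rangle\,ds,
\end{equation*}
where $\nu$ is the outward unit conormal of $\partial\Sigma$ in $\Sigma$; here I used that $\nu\in T\Sigma$, so only the tangential part of the ambient gradient $2(x-y)$ contributes.

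Next I would invoke the free boundary condition. Since $\Sigma$ meets $\partial\Omega$ orthogonally along $\partial\Sigma$, the conormal $\nu$ coincides with the outward unit normal $N$ of $\partial\Omega$ at each boundary point, so that $\langle x-y,\nu\rangle=\langle x-y,N\rangle\le|x-y|\le\mathcal{R}(\Omega)$ for every $x\in\partial\Sigma\subset\partial\Omega$, by the choice of $y$. This yields
\begin{equation*}
2\,A(\Sigma)=\int_{\partial\Sigma}\langle x-y,N\rangle\,ds\le\mathcal{R}(\Omega)\,L(\partial\Sigma).
\end{equation*}
Because $\Omega$ is a domain in $\R^3$ it is flat, hence $\Ric\equiv0$ and $K_M(T_p\partial M)\equiv0$; together with the hypothesis $\II\ge1$ this places us exactly in the setting of Theorem \ref{theorem.1.4}, which gives $L(\partial\Sigma)\le2\pi(g+r)$. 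Combining the two estimates produces
\begin{equation*}
A(\Sigma)\le\tfrac12\mathcal{R}(\Omega)\,L(\partial\Sigma)\le\pi(g+r)\,\mathcal{R}(\Omega),
\end{equation*}
which is the asserted inequality.

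For the rigidity the pleasant point is that no separate equality analysis of the divergence identity is needed. If $A(\Sigma)=\pi(g+r)\mathcal{R}(\Omega)$, then the displayed chain forces $\tfrac12\mathcal{R}(\Omega)L(\partial\Sigma)=\pi(g+r)\mathcal{R}(\Omega)$, and since $\mathcal{R}(\Omega)>0$ this means $L(\partial\Sigma)=2\pi(g+r)$. The equality case of Theorem \ref{theorem.1.4} then immediately gives that $\Omega$ is the Euclidean unit $3$-ball and $\Sigma$ is the Euclidean unit disk.

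The only steps requiring genuine care are the correct identification $\nu=N$ coming from the free boundary condition (so that the conormal flux integral becomes a flux through $\partial\Omega$), and the verification that a flat domain satisfies the curvature hypotheses $\Ric\ge0$ and $K_M(T_p\partial M)\ge0$ of Theorem \ref{theorem.1.4}. Beyond these the statement is a clean consequence of the earlier results, so I do not anticipate a substantive obstacle; the entire difficulty of the sharp length bound has already been absorbed into Theorem \ref{theorem.1.4}.
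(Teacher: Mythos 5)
Your proposal is correct and follows essentially the same route as the paper: the function $|x-y|^2/2$ on the minimal surface, the divergence theorem to get $2A(\Sigma)\le\mathcal{R}(\Omega)L(\partial\Sigma)$, the length bound $L(\partial\Sigma)\le2\pi(g+r)$, and reduction of the equality case to the rigidity statement of Theorem \ref{theorem.1.4}. The only cosmetic difference is that you route the boundary estimate through the free boundary identity $\nu=X$, which is actually unnecessary --- Cauchy--Schwarz gives $\langle x-y,\nu\rangle\le|x-y|\le\mathcal{R}(\Omega)$ for any unit vector $\nu$, which is all the paper uses.
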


It would be interesting to know if it is true that when $M^3$ satisfies $\Ric\ge0$ and $\II\ge1$, but $M^3$ is not isometric to $\bar B^3$, there exists a properly embedded free boundary minimal surface $\Sigma^2$ of index one in $M^3$ satisfying $L(\partial\Sigma)<2\pi(g+r)$.

In Section \ref{Section.3} we obtain similar results to Theorems \ref{theorem.1.3} and \ref{theorem.1.4} for free boundary stable CMC surfaces. We point out that Theorems \ref{theorem.1.5} and \ref{theorem.1.6} are also true for free boundary stable CMC surfaces assuming they are minimal.

\section{Free boundary minimal surfaces of index one}\label{Section.2}

Let $M^3$ be a compact connected Riemannian 3-manifold with nonempty boundary $\partial M$. In this work, we assume that $M$ has nonnegative Ricci curvature and that $\partial M$ is strictly convex, which means $\II(V,V)=\langle D_VX,V\rangle>0$ for all $V\in T_p\partial M\setminus\{0\}$ and $p\in\partial M$, where $X$ is the outward pointing unit normal to $\partial M$ and $D$ is the Levi-Civita connection of $M$. Here, $\II$ is the second fundamental form of $\partial M$ in $M$. Under these hypotheses, by \cite[Theorem 2.11]{FraserLi}, $M^3$ is diffeomorphic to the Euclidean unit 3-ball $\bar B^3$. In particular, $M$ is orientable.

Let $\Sigma^2$ be a compact surface with nonempty boundary $\partial\Sigma$. Suppose $\Sigma^2$ is properly embedded into $M^3$, i.e., $\Sigma^2$ is embedded into $M^3$ and $\Sigma\cap\partial M=\partial\Sigma$. Since $M^3$ is diffeomorphic to the unit ball $\bar B^3$, which is simply connected, $\Sigma$ must be orientable. Fix a unit normal to $\Sigma$, say $N$, and denote by $A$ the second fundamental form of $\Sigma$, that is, $A(Y,Z)=\langle D_YN,Z\rangle$, $Y,Z\in T_x\Sigma$, $x\in\Sigma$. Also, denote by $\nu$ the outward pointing conormal along $\partial\Sigma$ in $\Sigma$. We say that $\Sigma$ is free boundary if $\Sigma$ meets $\partial M$ orthogonally. In other words, $\Sigma$ is free boundary if $\nu=X$ along $\partial\Sigma$. 

Let $t\longmapsto\Sigma_t$, $t\in(-\varepsilon,\varepsilon)$, be a variation of $\Sigma=\Sigma_0$. It is well known that the first variation of area is given by 
\begin{eqnarray}\label{eq.1}
\frac{d}{dt}\bigg|_{t=0}A(\Sigma_t)=\int_{\Sigma}\diver_\Sigma(\xi)d\sigma=\int_{\Sigma}H\phi d\sigma+\int_{\partial\Sigma}\langle\xi,\nu\rangle ds,
\end{eqnarray}
where $\xi=\frac{\partial}{\partial t}|_{t=0}$ is the variation vector field, $\phi=\langle\xi,N\rangle$, and $H=\tr A$ is the mean curvature of $\Sigma$ in $M$. It follows from (\ref{eq.1}) that $\Sigma$ is a critical point for the area functional for variations that preserve the property $\Sigma\cap\partial M=\partial\Sigma$ if and only if $\Sigma$ is minimal with free boundary. Also, if $\Sigma$ is minimal with free boundary, the second variation of area is given by 
\begin{eqnarray*}
\frac{d^2}{dt^2}\bigg|_{t=0}A(\Sigma_t)=\I(\phi,\phi),
\end{eqnarray*}
where $\I:C^\infty(\Sigma)\times C^\infty(\Sigma)\to\R$ is the index form of $\Sigma$ given by 
\begin{eqnarray*}
&\I(\psi,\phi)=-\displaystyle\int_{\Sigma}\psi\{\Delta\phi+(\Ric(N,N)+|A|^2)\phi\}d\sigma+\int_{\partial\Sigma}\psi\left\{\frac{\partial\phi}{\partial\nu}-\II(N,N)\phi\right\}ds.&
\end{eqnarray*}
Above, $\Ric$ is the Ricci tensor of $M$ and $\Delta$ is the Laplace operator of $\Sigma$ with respect to the induced metric from $M$.

We say that $\phi\in C^\infty(\Sigma)$ is an eigenfunction of $\I$ associated to the eigenvalue $\lambda\in\R$ if $\I(\psi,\phi)=\lambda\langle\psi,\phi\rangle_{L^2(\Sigma)}$ for all $\psi\in C^\infty(\Sigma)$. This is equivalent to saying that $\phi$ solves the Robin-type boundary value problem
$$
\left\{
\begin{array}{ll}
L\phi+\lambda\phi=0&\mbox{on}\,\,\Sigma,\\
\dfrac{\partial\phi}{\partial\nu}=\II(N,N)\phi&\mbox{along}\,\,\partial\Sigma,
\end{array}
\right.
$$
where $L=\Delta+(\Ric(N,N)+|A|^2)$ is the Jacobi operator of $\Sigma$. If $\Sigma$ is minimal with free boundary, the index of $\Sigma$ is defined as the number of negative eigenvalues of $\I$ counted with multiplicities. The index of $\Sigma$ is denoted by $\ind(\Sigma)$. It is well known that the first eigenvalue $\lambda_1$ of $\I$ is characterized by the Rayleigh formula 
\begin{eqnarray}\label{eq.4}
\lambda_1=\inf_{\phi\in C^\infty(\Sigma)\setminus\{0\}}\frac{\I(\phi,\phi)}{\int_{\Sigma}\phi^2d\sigma}.
\end{eqnarray}
Thus, it follows directly from (\ref{eq.4}) that, under the assumptions $\Ric\ge0$ and $\II>0$, all free boundary minimal surfaces have index at least one, since $\I(1,1)<0$.

Before proving our first result, we are going to state a very important lemma. This lemma is based on an argument presented in \cite{Hersch} (see also \cite{LiYau}).

\begin{lemma}\label{lemma.Hersch}
Let $\Sigma^2$ be a compact Riemannian surface with nonempty boundary $\partial\Sigma$. Suppose that $F:\Sigma\to\bar\D$ and $\phi_1:\Sigma\to\R$ are continuous functions such that $F(\Sigma\setminus\partial\Sigma)\subset\D$ and $\phi_1\ge0$. Then, there exists $h\in\Aut(\bar\D)$ such that $$\int_\Sigma(h\circ F)\phi_1d\sigma=0.$$
\end{lemma}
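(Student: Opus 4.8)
The plan is to use a topological degree argument (Hersch's balancing trick) to find an element of $\Aut(\bar\D)$ that forces the weighted center of mass to vanish. The statement is precisely the type of ``balancing lemma'' that underlies the Hersch, Li--Yau, and Fraser--Schoen eigenvalue estimates, so I would follow that circle of ideas. The key point is that the conformal automorphism group $\Aut(\bar\D)$ of the closed disk is parametrized (as a set with the relevant topology) by $\bar\D$ itself: for each $a \in \D$ there is a M\"obius transformation $h_a$ of the disk sending $0$ to $a$, and these degenerate to constant maps as $a \to \partial\D$. So I would reduce the problem to a statement about a continuous map from $\bar\D$ to $\bar\D$ and invoke a fixed-point / degree argument.

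Concretely, first I would note that since $\phi_1 \ge 0$ is continuous and $\Sigma$ is compact, the measure $\phi_1\, d\sigma$ is a finite nonnegative measure on $\Sigma$; I may assume it is not identically zero (otherwise the integral vanishes trivially for any $h$). For each point $a \in \D$ let $h_a \in \Aut(\bar\D)$ be the standard conformal automorphism with $h_a(0) = a$, chosen to depend continuously on $a$, and define
\begin{eqnarray*}
G(a) = \frac{1}{\int_\Sigma \phi_1\, d\sigma}\int_\Sigma (h_a \circ F)\,\phi_1\, d\sigma \in \C \cong \R^2.
\end{eqnarray*}
Because $F$ maps into $\bar\D$ and $h_a$ maps $\bar\D$ into $\bar\D$, the integrand lies in $\bar\D$, so $G(a) \in \bar\D$; thus $G:\D \to \bar\D$ is a continuous map. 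I want to produce $a$ with $G(a) = 0$, which is exactly the desired balancing condition.

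The crux is the boundary behavior of $G$ as $a \to \partial\D$, which is where the hypothesis $F(\Sigma\setminus\partial\Sigma)\subset\D$ is essential. I would show that $G$ extends continuously to $\bar\D$ with $G|_{\partial\D} = \mathrm{id}$, or at least that $G$ maps points near $\partial\D$ close to the corresponding boundary point, so that $G$ restricted to the boundary sphere is (homotopic to) the identity. Heuristically, as $a \to p \in \partial\D$, the automorphism $h_a$ converges locally uniformly on $\D$ to the constant $p$; since $\phi_1\, d\sigma$ assigns no mass concentrated on the part of $\Sigma$ mapping to $\partial\D$ in a way that would spoil the limit, the averaged quantity $G(a)$ tends to $p$. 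Once I know $G$ is a continuous self-map of $\bar\D$ whose boundary restriction has degree one (is homotopic to the identity of $\partial\D = S^1$), a standard degree-theoretic argument (equivalently, Brouwer's theorem applied after observing $G$ cannot avoid $0$ without giving a retraction of $\bar\D$ onto $\partial\D$) yields a point $a_0$ with $G(a_0) = 0$. Setting $h = h_{a_0}$ completes the proof.

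I expect the main obstacle to be making the boundary analysis rigorous: controlling $G(a)$ as $a \to \partial\D$ requires a uniform-integrability or dominated-convergence argument, because the automorphisms $h_a$ become singular at the boundary and the convergence $h_a \to p$ is only locally uniform on the open disk, not on $\bar\D$. The hypothesis $F(\Sigma \setminus \partial\Sigma) \subset \D$ guarantees that the only points of $\Sigma$ mapping to $\partial\D$ lie in $\partial\Sigma$, which has measure zero for $d\sigma$; so the mass of $\phi_1\, d\sigma$ is carried by the open-disk part, where $h_a \to p$ uniformly on compact subsets, and one controls a thin collar near $\partial\D$ by a small-measure estimate. Handling this limit carefully — possibly by first extending $F$ or approximating, and invoking dominated convergence since $|h_a \circ F|\phi_1 \le \phi_1 \in L^1$ — is the technical heart of the argument; the topological conclusion is then routine.
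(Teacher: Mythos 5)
Your proposal is correct and follows essentially the same route as the paper's own proof: parametrize M\"obius transformations of $\bar\D$ by points of the disk, show the averaged map extends continuously to $\bar\D$ via dominated convergence (using that $F(\Sigma\setminus\partial\Sigma)\subset\D$ and $\partial\Sigma$ has $d\sigma$-measure zero), and conclude by the no-retraction/degree argument. The only cosmetic difference is the convention: you take $h_a(0)=a$ so the boundary map is the identity, while the paper uses $m_a(z)=\frac{z-a}{1-\bar az}$ so the boundary map is $a\mapsto -a$; both have degree one, so the topological conclusion is identical.
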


\begin{proof}
See Appendix A.
\end{proof}

Our first result is the following.

\begin{theorem}\label{theorem.1}
Let $M^3$ be a compact Riemannian 3-manifold with nonempty boundary $\partial M$. Suppose that $\Ric\ge0$ and $\II\ge1$,  where $\Ric$ is the Ricci tensor of $M$ and $\II$ is the second fundamental form of $\partial M$. If $\Sigma^2$ is a properly embedded free boundary minimal surface of index one in $M^3$, then the length of $\partial\Sigma$ satisfies
\begin{eqnarray}\label{eq.theorem.1}
L(\partial\Sigma)\le2\pi(g+r),
\end{eqnarray}
where $g$ is the genus of $\Sigma$ and $r$ is the number of connected components of $\partial\Sigma$. Moreover, if equality holds, we have:
\begin{enumerate}
\item[\rm (i)] $\Sigma$ (w.r.t. the induced metric from $M$) is isometric to the Euclidean unit disk $\bar\D$;
\item[\rm (ii)] $\partial\Sigma$ is a geodesic of $\partial M$;
\item[\rm (iii)] $\Sigma$ is totally geodesic in $M$; and
\item[\rm (iv)] all sectional curvatures of $M$ vanish on $\Sigma$.
\end{enumerate}
\end{theorem}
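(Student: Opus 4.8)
The plan is to run a Hersch-type balancing argument driven by the second variation, with conformal test maps supplied by the Ahlfors--Gabard circle map, and then to extract rigidity through Gauss--Bonnet.

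First I would record the consequences of the index hypothesis. Since $\ind(\Sigma)=1$, the index form $\I$ has exactly one negative eigenvalue, so $\lambda_1<0\le\lambda_2$ and the first eigenfunction $\phi_1$ may be taken positive. Next I would invoke the theorem of Ahlfors and Gabard: as $\Sigma$ is a compact orientable surface of genus $g$ with $r$ boundary components, its induced conformal structure admits a proper holomorphic (branched) map $F\colon\Sigma\to\bar\D$ of degree $d\le g+r$ with $F(\partial\Sigma)\subset\partial\D$ and $F(\Sigma\setminus\partial\Sigma)\subset\D$. Applying Lemma \ref{lemma.Hersch} to $F$ and $\phi_1$ produces $h\in\Aut(\bar\D)$ such that, writing $u=(u^1,u^2)=h\circ F$, one has $\int_\Sigma u^i\phi_1\,d\sigma=0$ for $i=1,2$. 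Thus each $u^i$ is $L^2$-orthogonal to $\phi_1$, and the variational characterization of $\lambda_2$ gives $\I(u^i,u^i)\ge0$.

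The analytic heart is the evaluation of $\sum_i\I(u^i,u^i)$. Integrating by parts in the definition of $\I$, the boundary terms involving $\partial u^i/\partial\nu$ cancel, leaving
\begin{equation*}
\sum_i\I(u^i,u^i)=\int_\Sigma|\nabla u|^2\,d\sigma-\int_\Sigma(\Ric(N,N)+|A|^2)|u|^2\,d\sigma-\int_{\partial\Sigma}\II(N,N)|u|^2\,ds.
\end{equation*}
Now $|u|^2\le1$ on $\Sigma$ with $|u|^2=1$ on $\partial\Sigma$ (since $h\circ F$ maps $\partial\Sigma$ into $\partial\D$), while $\Ric(N,N)\ge0$, $|A|^2\ge0$, and $\II(N,N)\ge1$ because $N$ is a unit vector tangent to $\partial M$ along the free boundary. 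Crucially, conformality of $u$ together with $\deg u=d$ gives $\int_\Sigma|\nabla u|^2\,d\sigma=2\pi d$ (twice the Euclidean area of $\bar\D$ counted with multiplicity), independently of $h$. Combining $0\le\sum_i\I(u^i,u^i)$ with these inequalities yields
\begin{equation*}
L(\partial\Sigma)\le\int_{\partial\Sigma}\II(N,N)\,ds\le2\pi d\le2\pi(g+r),
\end{equation*}
which is (\ref{eq.theorem.1}).

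Finally I would treat the equality case by tracing which inequalities saturate. Equality forces $d=g+r$, $\II(N,N)\equiv1$ on $\partial\Sigma$, $\sum_i\I(u^i,u^i)=0$, and $(\Ric(N,N)+|A|^2)|u|^2\equiv0$; since $|u|$ vanishes only at the finitely many points over $h^{-1}(0)$, continuity gives $A\equiv0$ (this is (iii), and $\det A=0$) and $\Ric(N,N)\equiv0$ on $\Sigma$. By the Gauss equation $K_\Sigma=K_M(T\Sigma)$, and $\Ric\ge0$ together with $\Ric(N,N)=0$ forces $K_M(T\Sigma)\ge0$, so $K_\Sigma\ge0$. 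The free boundary condition identifies the geodesic curvature of $\partial\Sigma$ in $\Sigma$ with $\II(\tau,\tau)\ge1$; feeding these into Gauss--Bonnet, $\int_\Sigma K_\Sigma+\int_{\partial\Sigma}\II(\tau,\tau)=2\pi(2-2g-r)$, and the bounds $\int_\Sigma K_\Sigma\ge0$, $\int_{\partial\Sigma}\II(\tau,\tau)\ge L(\partial\Sigma)=2\pi(g+r)$ force $2-2g-r\ge g+r$, whence $g=0$ and $r=1$. Equality then gives $K_\Sigma\equiv0$ and $\II(\tau,\tau)\equiv1$; a flat simply connected surface whose boundary has constant geodesic curvature $1$ and length $2\pi$ is isometric to the Euclidean unit disk (via its developing map), proving (i). For (ii), the geodesic curvature of $\partial\Sigma$ inside $\partial M$ equals $A(\tau,\tau)=0$, so $\partial\Sigma$ is a geodesic of $\partial M$. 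For (iv), at each point of $\Sigma$ one has $K_{12}=K_\Sigma=0$ and $K_{13}+K_{23}=\Ric(N,N)=0$, while $\Ric\ge0$ gives $K_{13},K_{23}\ge0$; hence $K_{13}=K_{23}=0$, and in fact the full Ricci tensor vanishes on $\Sigma$ (a positive semidefinite form with vanishing diagonal in an orthonormal frame is zero), so in dimension three all sectional curvatures vanish along $\Sigma$.

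The step I expect to be the main obstacle is the conformal input: one must know that the metric conformal class carries an Ahlfors--Gabard proper map to the disk of degree at most $g+r$ and that its Dirichlet energy is exactly $2\pi d$ irrespective of the balancing automorphism $h$ — it is the sharp degree bound (rather than a weaker count) that makes the constant $2\pi(g+r)$ sharp. A secondary delicate point is the rigidity bookkeeping: converting each saturated inequality into the pointwise statements (i)--(iv), in particular the passage from the three coordinate sectional curvatures, via $\Ric\ge0$, to full flatness of $M$ along $\Sigma$.
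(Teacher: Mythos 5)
Your proposal is correct and follows essentially the same route as the paper: the index-one hypothesis plus Gabard's proper conformal branched cover and the Hersch-type balancing lemma give the test functions, and Gauss--Bonnet handles the rigidity. In fact you are slightly more careful than the paper at two points (using that $|u|$ vanishes only on a finite set to conclude $A\equiv0$ and $\Ric(N,N)\equiv0$, and passing from vanishing diagonal Ricci entries to vanishing of all sectional curvatures via positive semidefiniteness and the three-dimensional curvature decomposition), so no changes are needed.
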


\begin{proof}
Let $\phi_1:\Sigma\to\R$ be the first eigenfunction of $\I$. We know that $\phi_1$ does not change sign. Then, without loss of generality, we can assume $\phi_1\ge0$. Since $\ind(\Sigma)=1$, for all $f\in C^\infty(\Sigma)$ with $\int_\Sigma f\phi_1d\sigma=0$, we have $\I(f,f)\ge0$, i.e.,
\begin{eqnarray}\label{eq.aux.1}
\int_\Sigma\{|\nabla f|^2-(\Ric(N,N)+|A|^2)f^2\}d\sigma-\int_{\partial\Sigma}\II(N,N)f^2ds\ge0.
\end{eqnarray}
On the other hand, by \cite[Theorem 7.2]{Gabard}, there exists a proper conformal branched cover $F:\Sigma\to\bar\D$ satisfying $\deg(F)\le g+r$. By Lemma \ref{lemma.Hersch}, we can assume $\int_\Sigma f_i\phi_1d\sigma=0$, where $F=(f_1,f_2)$. Then, using $f_i$ ($i=1,2$) as test function in (\ref{eq.aux.1}), we have
\begin{eqnarray*}
0&\le&\int_\Sigma\{|\nabla f_i|^2-(\Ric(N,N)+|A|^2)f_i^2\}d\sigma-\int_{\partial\Sigma}\II(N,N)f_i^2ds\\
&\le&\int_\Sigma|\nabla f_i|^2d\sigma-\int_{\partial\Sigma}f_i^2ds,
\end{eqnarray*}
where above we have used that $\Ric\ge0$ and $\II\ge1$. Hence, because $F(\partial\Sigma)\subset\s^1$ (since $F$ is proper) and $F$ is conformal, 
\begin{eqnarray*}
0&\le&\sum_{i=1}^2\left(\int_\Sigma|\nabla f_i|^2d\sigma-\int_{\partial\Sigma}f_i^2ds\right)=2\int_\Sigma dF^*g_{\can}-L(\partial\Sigma)\\
&=&2\pi\deg(F)-L(\partial\Sigma)\le2\pi(g+r)-L(\partial\Sigma),
\end{eqnarray*}
which implies (\ref{eq.theorem.1}).

If equality in (\ref{eq.theorem.1}) holds, all inequalities above must be equalities. Then, $A\equiv0$, $\Ric(N,N)=0$ on $\Sigma$, and $\II(N,N)=1$ along $\partial\Sigma$. Using the Gauss equation $R+H^2-|A|^2=2(\Ric(N,N)+K)$, where $K$ is the Gaussian curvature of $\Sigma$ and $R$ is the scalar curvature of $M$, we have $2K=R\ge0$. Observe that, since $\Sigma$ is free boundary ($\nu=X$ along $\partial\Sigma$), the geodesic curvature of $\partial\Sigma$ in $\Sigma$ is given by $\kappa=g(D_T\nu,T)=g(D_TX,T)=\II(T,T)\ge1$, where $T$ is the unit tangent to $\partial\Sigma$. Then, by Gauss-Bonnet theorem,
\begin{eqnarray*}
2\pi(2-2g-r)&=&2\pi\chi(\Sigma)=\int_\Sigma Kd\sigma+\int_{\partial\Sigma}\kappa ds\\
&\ge&L(\partial\Sigma)=2\pi(g+r), 
\end{eqnarray*}
i.e.,
\begin{eqnarray*}
2\ge3g+2r,
\end{eqnarray*} 
which implies $r=1$ and $g=0$. Then, all inequalities above must be equalities. So, $K\equiv0$ and $\kappa\equiv1$. Also, observe that the geodesic curvature $\bar\kappa$ of $\partial\Sigma$ in $\partial M$ (w.r.t. $N$) satisfies $\bar\kappa=g(D_TN,T)=A(T,T)=0$, thus $\partial\Sigma$ is a geodesic of $\partial M$. Now, let $x\in\Sigma$ and $\{e_1,e_2,e_3=N\}\subset T_xM$ be such that $\{e_1,e_2\}$ is an orthonormal basis of $T_x\Sigma$ and denote by $K_M$ the sectional curvature of $M$. Since $\Ric(e_1,e_1)+\Ric(e_2,e_2)+\Ric(e_3,e_3)=R=0$ on $\Sigma$ and $\Ric\ge0$ everywhere, we have $\Ric(e_i,e_i)=0$ on $\Sigma$ for $i=1,2,3$, which implies $K_M(e_i,e_j)=0$ for $i\neq j$.
\end{proof}

Below, we are going to present some corollaries of Theorem \ref{theorem.1}. But, before doing that, let us state an important result due to Xia \cite{Xia}.

\begin{theorem}[Xia]
Let $M^{n+1}$ be a compact Riemannian $(n+1)$-manifold with nonempty boundary $\partial M$. Suppose that $\Ric\ge0$ and $\II\ge c>0$ for some constant $c>0$, where $\Ric$ is the Ricci tensor of $M$ and $\II$ is the second fundamental form of $\partial M$ in $M$. Then, the first nonzero eigenvalue of the Laplace operator acting on functions on $\partial M$ (w.r.t. the induced metric from $M$) satisfies 
\begin{eqnarray*}
\lambda_1\ge nc^2.
\end{eqnarray*}
Furthermore, the equality holds if and only if $M^{n+1}$ is isometric to the Euclidean $(n+1)$-ball of radius $1/c$. 
\end{theorem}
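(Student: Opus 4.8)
The plan is to apply the Reilly formula to the harmonic extension of a first eigenfunction of $\Delta_{\partial M}$, reduce the resulting integral identity to a scalar quadratic inequality using $\Ric\ge0$ and $\II\ge c$, and read off $\lambda_1\ge nc^2$ from the sign of the discriminant. Concretely, let $z$ be a nonconstant first eigenfunction on $\partial M$, so that $\Delta_{\partial M}z=-\lambda_1 z$ and $\int_{\partial M}z\,dA=0$, and let $u$ be its harmonic extension, i.e. the solution of $\Delta u=0$ in $M$ with $u=z$ on $\partial M$. Writing $z_\nu=\partial u/\partial\nu$ for the outward normal derivative and $H=\tr\II$, the Reilly formula reads
\begin{eqnarray*}
\int_M\left(-|\nabla^2 u|^2-\Ric(\nabla u,\nabla u)\right)dV=\int_{\partial M}\left(Hz_\nu^2+2z_\nu\,\Delta_{\partial M}z+\II(\nabla z,\nabla z)\right)dA,
\end{eqnarray*}
and since $\Ric\ge0$ the left-hand side is $\le0$.

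Next I would bound the boundary integrand from below. Substituting $\Delta_{\partial M}z=-\lambda_1 z$ and using $H\ge nc$, $\II(\nabla z,\nabla z)\ge c|\nabla z|^2$, together with $\int_{\partial M}|\nabla z|^2\,dA=\lambda_1\int_{\partial M}z^2\,dA$, the cross term is controlled by $\int_{\partial M}z z_\nu\,dA=\int_M|\nabla u|^2\,dV\ge0$ and the Cauchy--Schwarz inequality $\int_{\partial M}z z_\nu\,dA\le ab$, where $a^2=\int_{\partial M}z_\nu^2\,dA$ and $b^2=\int_{\partial M}z^2\,dA>0$. These reductions collapse everything into the single inequality $0\ge nc\,a^2-2\lambda_1 ab+c\lambda_1 b^2$. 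Regarding the right-hand side as an upward-opening quadratic in $a$ that attains a nonpositive value, its discriminant must be nonnegative, i.e. $4\lambda_1^2-4nc^2\lambda_1\ge0$; since $\lambda_1>0$, this gives $\lambda_1\ge nc^2$.

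For the rigidity, equality $\lambda_1=nc^2$ turns the quadratic into $nc(a-cb)^2\le0$, forcing every inequality above to be an equality. In particular $\nabla^2 u\equiv0$ and $\Ric(\nabla u,\nabla u)\equiv0$ in $M$, while on $\partial M$ one obtains $z_\nu=cz$ (Cauchy--Schwarz equality, with ratio $a/b=c$) and $(H-nc)z_\nu^2=0$. Because $\II\ge c$ has trace $H=nc$ wherever $z_\nu=cz\neq0$, all principal curvatures equal $c$ there, and by continuity across the measure-zero zero set of $z$ we get full umbilicity $\II=c\,g_{\partial M}$. Combining $\nabla^2 u\equiv0$ with the hypersurface identity $\nabla^2_{\partial M}z=\nabla^2 u|_{T\partial M}-z_\nu\,\II$ yields $\nabla^2_{\partial M}z+c^2 z\,g_{\partial M}=0$, which is Obata's equation with constant $c^2=\lambda_1/n$; hence $\partial M$ is isometric to the round sphere $\s^n(1/c)$. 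Finally, applying the same analysis to each element of the $(n+1)$-dimensional first eigenspace, the harmonic extensions $u_j$ satisfy $\nabla^2 u_j\equiv0$, so the gradients $\nabla u_j$ are parallel with constant pairwise inner products; after normalization they form a parallel orthonormal frame, making $M$ flat, and $\Phi=(u_1,\dots,u_{n+1})$ a local isometry which, since $M$ is a topological ball with boundary $\s^n(1/c)$, is an isometry onto $\bar B^{n+1}(1/c)$.

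I expect the main obstacle to be the rigidity rather than the inequality. Two points require care: extracting full umbilicity of $\partial M$ from the single-direction equality $\II(\nabla z,\nabla z)=c|\nabla z|^2$, which I would handle by the trace argument above on the dense set $\{z\neq0\}$; and upgrading the pointwise data $\nabla^2 u_j\equiv0$ into a genuine global isometry with the ball in the presence of the boundary, which is precisely where Obata's theorem on $\partial M$ and the parallel-frame construction in the interior carry the weight of the proof.
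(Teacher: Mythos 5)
The paper does not actually prove this statement: it is quoted as a known result and used as a black box, with a citation to Xia, so there is no internal proof to compare against. Your argument is, in substance, Xia's original one: Reilly's formula applied to the harmonic extension of a first eigenfunction, the reduction via $H\ge nc$, $\II(\nabla z,\nabla z)\ge c|\nabla z|^2$ and Cauchy--Schwarz to the quadratic inequality $nc\,a^2-2\lambda_1 ab+c\lambda_1 b^2\le0$, the discriminant giving $\lambda_1\ge nc^2$, and in the equality case umbilicity of $\partial M$, Obata's equation on the boundary, and the parallel-gradient construction in the interior; all of these steps are correct as you state them. Two points in your rigidity endgame deserve attention. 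First, Obata's theorem needs $\partial M$ connected; this holds here by a Frankel-type second-variation argument under $\Ric\ge0$ and $\II>0$, but should be said. Second, and more seriously, you invoke ``$M$ is a topological ball with boundary $\s^n(1/c)$'' to promote the local isometry $\Phi=(u_1,\dots,u_{n+1})$ to a global one; in dimension $n+1$ no such topological statement is available a priori (the Fraser--Li ball theorem the paper uses is strictly 3-dimensional). Fortunately it is also unnecessary: once the $u_j$ are normalized so that $\langle\nabla u_i,\nabla u_j\rangle=\delta_{ij}$ (the Gram matrix is constant since the gradients are parallel, and positive definite since a vanishing combination $\sum c_ju_j$ would restrict to a constant first eigenfunction), the identity $\partial u_j/\partial\nu=cz_j$ together with orthonormality of the frame gives $c^2\sum_j z_j^2=\sum_j\langle\nabla u_j,\nu\rangle^2=1$ on $\partial M$, so $f=\sum_j u_j^2$ satisfies $\nabla^2 f=2g$ in $M$ and $f=1/c^2$ on $\partial M$; hence $\Phi$ is a proper local isometry sending $M$ into $\bar B^{n+1}(1/c)$ and $\partial M$ into $\s^n(1/c)$, and a proper local isometry onto a simply connected target is a Riemannian covering, hence an isometry. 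With that repair your proposal is a complete and faithful reconstruction of the cited proof.
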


Our first corollary is the following.

\begin{corollary}\label{corollary.1}
Let $M^3$ be a compact Riemannian 3-manifold with nonempty boundary $\partial M$. Suppose that $\Ric\ge0$, $\II\ge1$, and $K_M(T_p\partial M)\ge0$ for all $p\in\partial M$, where $K_M$ is the sectional curvature of $M$. If $\Sigma^2$ is a properly embedded free boundary minimal surface of index one in $M^3$, then the length of $\partial\Sigma$ satisfies
\begin{eqnarray*}
L(\partial\Sigma)\le2\pi(g+r).
\end{eqnarray*}
Furthermore, if equality holds, $M^3$ is isometric to the Euclidean unit 3-ball $\bar B^3$ and $\Sigma^2$ is isometric to the Euclidean unit disk $\bar\D$.
\end{corollary}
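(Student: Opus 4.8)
The inequality $L(\partial\Sigma)\le 2\pi(g+r)$ is immediate, since the hypotheses of the corollary contain those of Theorem \ref{theorem.1}. The content is therefore the rigidity statement, so suppose equality holds. Then all four conclusions of Theorem \ref{theorem.1} are available; in particular, the analysis of the equality case there forces $g=0$ and $r=1$, so $\Sigma$ is a disk and $\partial\Sigma$ is a single connected curve of length $L(\partial\Sigma)=2\pi$. Since $\Sigma$ is properly embedded, $\partial\Sigma$ is a simple closed curve in $\partial M$, and by conclusion (ii) it is a geodesic of $\partial M$. Thus we have produced in $\partial M$ a simple closed geodesic of length exactly $2\pi$, and it remains to control the intrinsic geometry of $\partial M$.

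The key step is to promote the extra hypotheses into a curvature bound on the surface $\partial M$. Applying the Gauss equation to the hypersurface $\partial M\subset M$, at each $p\in\partial M$ the intrinsic Gaussian curvature satisfies
$$K_{\partial M}(p)=K_M(T_p\partial M)+\kappa_1(p)\kappa_2(p),$$
where $\kappa_1,\kappa_2$ are the principal curvatures of $\partial M$. Because $\II\ge1$ we have $\kappa_1,\kappa_2\ge1$, hence $\kappa_1\kappa_2\ge1$; together with the assumption $K_M(T_p\partial M)\ge0$ this gives $K_{\partial M}\ge1$ on all of $\partial M$. Since $M$ is diffeomorphic to $\bar B^3$ (as recalled in Section \ref{Section.2}), $\partial M$ is a closed surface, and it carries the simple closed geodesic $\partial\Sigma$ of length $2\pi$. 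The classical theorem of Toponogov \cite{Toponogov} recalled in the introduction then forces $\partial M$ to be isometric to the standard unit sphere $\s^2$.

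Finally, I would invoke Xia's theorem to pass from $\partial M$ to $M$. The first nonzero eigenvalue of the Laplacian on the round unit sphere $\s^2$ equals $2$, so $\lambda_1(\partial M)=2$. Applying Xia's theorem with $n=2$ and $c=1$, for which $nc^2=2$, we are precisely in its equality case, and therefore $M^3$ is isometric to the Euclidean unit $3$-ball $\bar B^3$. That $\Sigma^2$ is isometric to the Euclidean unit disk $\bar\D$ is already conclusion (i) of Theorem \ref{theorem.1}. The main obstacle is the second step: recognizing that the hypothesis $K_M(T_p\partial M)\ge0$ is exactly what is needed, via the Gauss equation, to bound the intrinsic curvature of $\partial M$ from below by $1$, thereby converting the \emph{local} rigidity of Theorem \ref{theorem.1} along $\Sigma$ into the \emph{global} statement that $\partial M$ is round; after that, Xia's rigidity theorem does the rest.
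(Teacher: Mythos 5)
Your proposal is correct and follows essentially the same route as the paper: the Gauss equation combined with $\II\ge1$ and $K_M(T_p\partial M)\ge0$ gives $K_{\partial M}\ge1$, the equality case of Theorem \ref{theorem.1} produces a simple closed geodesic of length $2\pi$ in $\partial M$ so that Toponogov's theorem makes $\partial M$ round, and Xia's theorem (in its equality case, since $\lambda_1(\s^2)=2=nc^2$) then forces $M$ to be the unit ball. Your only addition is spelling out explicitly why Xia's equality case is triggered, which the paper leaves implicit.
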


\begin{proof}
Denote by $K_{\partial M}$ the Gaussian curvature of $\partial M$ (w.r.t. the induced metric from $M$). Also, denote by $k_1$ and $k_2$ the principal curvatures of $\partial M$ in $M$. By Gauss equation,
\begin{eqnarray*}
K_{\partial M}=K_M(T_p\partial M)+k_1k_2\ge 1.
\end{eqnarray*}
Now, if $L(\partial\Sigma)=2\pi(g+r)$, by Theorem \ref{theorem.1}, $\Sigma^2$ is isometric to $\bar\D$ and $\partial\Sigma$ is a geodesic of $\partial M$. In particular, $\partial\Sigma$ is a simple (because $\Sigma$ is embedded into $M$) geodesic of $\partial M$ with $L(\partial\Sigma)=2\pi$. Then, by Toponogov theorem, $\partial M$ is isometric to the standard unit 2-sphere $\s^2$. Thus, by Xia theorem, $M^3$ is isometric to $\bar B^3$.
\end{proof}

Using the corollary above together with the isoperimetric inequality for minimal disks in $\R^3$ (see \cite{BarbosadoCarmo} for the general case), we have a sharp upper bound for the area of a properly embedded free boundary minimal disk of index one in a strictly convex domain in $\R^3$.

\begin{corollary}\label{corollary.2}
Let $\Omega$ be a smooth bounded domain in $\R^3$ whose boundary $\partial\Omega$ is strictly convex, say $\II\ge 1$, where $\II$ is the second fundamental form of $\partial\Omega$ in $\R^3$. If $\Sigma^2$ is a properly embedded free boundary minimal disk of index one in $\Omega$, then the area of $\Sigma$ satisfies 
\begin{eqnarray*}
A(\Sigma)\le\pi.
\end{eqnarray*} 
Moreover, if equality holds, $\Omega$ is the Euclidean unit 3-ball and $\Sigma^2$ is the Euclidean unit disk. 
\end{corollary}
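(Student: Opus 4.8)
The plan is to deduce Corollary \ref{corollary.2} from Corollary \ref{corollary.1} by importing the Euclidean structure through the isoperimetric inequality. Since $\Omega\subset\R^3$ is a strictly convex domain with $\II\ge1$, the ambient metric is flat, so automatically $\Ric\ge0$ and $K_M(T_p\partial\Omega)\ge0$ for all $p\in\partial\Omega$; thus all hypotheses of Corollary \ref{corollary.1} are met with $M^3=\Omega$. Because $\Sigma$ is assumed to be a disk we have $g=0$ and $r=1$, so Corollary \ref{corollary.1} gives directly $L(\partial\Sigma)\le2\pi$.

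\smallskip

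Next I would convert this length bound into an area bound using the isoperimetric inequality for minimal surfaces in $\R^3$. The classical result (for the disk case) asserts that for a minimal disk $\Sigma$ in $\R^3$ one has $4\pi A(\Sigma)\le L(\partial\Sigma)^2$, with equality only for a flat round disk. Combining this with $L(\partial\Sigma)\le2\pi$ yields
\begin{eqnarray*}
A(\Sigma)\le\frac{L(\partial\Sigma)^2}{4\pi}\le\frac{(2\pi)^2}{4\pi}=\pi,
\end{eqnarray*}
which is the desired bound.

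\smallskip

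For the rigidity statement, suppose $A(\Sigma)=\pi$. Then both inequalities in the chain above must be equalities. Equality in the isoperimetric inequality forces $\Sigma$ to be a flat Euclidean disk, and in particular $L(\partial\Sigma)=2\pi$. But then equality holds in Corollary \ref{corollary.1}, which concludes that $\Omega$ is isometric to the Euclidean unit $3$-ball $\bar B^3$ and $\Sigma$ is isometric to the Euclidean unit disk $\bar\D$. This is exactly the claimed conclusion.

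\smallskip

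The only genuine subtlety is the justification of the isoperimetric inequality together with its equality case. The sharp isoperimetric inequality for minimal surfaces in $\R^n$ is known for disks (and more generally by the work referenced as \cite{BarbosadoCarmo}), so I would simply invoke it; the main point to be careful about is that its equality case really does characterize the flat round disk, since that is what drives the rigidity. Once that is in hand the corollary follows immediately, with no further computation needed.
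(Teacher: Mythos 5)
Your proof is correct and follows essentially the same route as the paper: the isoperimetric inequality $4\pi A(\Sigma)\le L(\partial\Sigma)^2$ for minimal disks combined with the length bound $L(\partial\Sigma)\le 2\pi$, and then Corollary \ref{corollary.1} for the rigidity when $A(\Sigma)=\pi$. The only (harmless) difference is that your appeal to the equality case of the isoperimetric inequality is unnecessary: the chain $\pi=A(\Sigma)\le L(\partial\Sigma)^2/(4\pi)\le\pi$ already forces $L(\partial\Sigma)=2\pi$ directly, after which Corollary \ref{corollary.1} alone delivers the full rigidity conclusion, exactly as in the paper.
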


\begin{proof}
The isoperimetric inequality for minimal disks in $\R^3$ says that 
\begin{eqnarray*}
4\pi A(\Sigma)\le L(\partial\Sigma)^2
\end{eqnarray*}
Then, by Theorem \ref{theorem.1}, $A(\Sigma)\le L(\partial\Sigma)^2/(4\pi)\le\pi$. Moreover, if $A(\Sigma)=\pi$, then $L(\partial\Sigma)=2\pi$, which by Corollary \ref{corollary.1} implies that $\Omega$ is the Euclidean unit 3-ball and $\Sigma^2$ is the Euclidean unit disk. 
\end{proof}

For the general area estimate we will introduce a constant depending on the domain. For this purpose, let $\Omega$ be a smooth bounded domain in $\R^3$ whose boundary $\partial\Omega$ is strictly convex, say $\II\ge 1$. Define $\mathcal{R}(\Omega)$ by 
\begin{eqnarray*}
\mathcal{R}(\Omega)=\inf_{y\in\Omega}\sup_{x\in\partial\Omega}|x-y|.
\end{eqnarray*}
It is not difficult to see that 
\begin{eqnarray*}
\frac{\diam(\Omega)}{2}\le\mathcal{R}(\Omega)\le\diam(\Omega).
\end{eqnarray*}
 Moreover, since $\II\ge 1$, we have $K_{\partial\Omega}\ge 1$, which by Bonnet-Myers theorem implies that $\diam(\partial\Omega)\le\pi$. Then, 
\begin{eqnarray*}
\mathcal{R}(\Omega)\le\diam(\Omega)<\diam(\partial\Omega)\le\pi.
\end{eqnarray*} 
$\mathcal{R}(\Omega)>0$ is the smallest real number $\delta>0$ such that $\Omega\subset B^3(x,\delta)$ for some $x\in\Omega$, where $B^3(x,\delta)$ is the Euclidean 3-ball of radius $\delta>0$ and center $x$.

Our result is the following.

\begin{corollary}\label{corollary.3}
Let $\Omega$ be a smooth bounded domain in $\R^3$ whose boundary $\partial\Omega$ is strictly convex, say $\II\ge 1$. If $\Sigma^2$ is a properly embedded free boundary minimal surface of index one in $\Omega$, then the area of $\Sigma$ satisfies 
\begin{eqnarray}\label{eq.General.Area.Estimate}
A(\Sigma)\le\pi(g+r)\mathcal{R}(\Omega).
\end{eqnarray} 
Moreover, if equality holds, $\Omega$ is the Euclidean unit 3-ball and $\Sigma^2$ is the Euclidean unit disk. 
\end{corollary}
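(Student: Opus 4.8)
The plan is to combine the area estimate from Corollary \ref{corollary.2}'s circle of ideas with a geometric comparison that relates the area of a minimal surface in $\R^3$ to its boundary length and the size of the enclosing ball. The starting point is the length bound $L(\partial\Sigma)\le 2\pi(g+r)$, which is furnished directly by Theorem \ref{theorem.1} (the hypotheses $\Ric\ge0$ and $\II\ge1$ hold automatically for a strictly convex domain in $\R^3$, since $\R^3$ is flat). So the genuine content is passing from this perimeter bound to the area bound $A(\Sigma)\le\pi(g+r)\mathcal R(\Omega)$, and this is where $\mathcal R(\Omega)$ must enter.

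The key tool I would use is the monotonicity/comparison principle for minimal surfaces relative to a centered ball. Let $x_0\in\Omega$ be a (near-)optimal center, so that $\Omega\subset B^3(x_0,\delta)$ for $\delta$ arbitrarily close to $\mathcal R(\Omega)$. Consider the function $\rho(p)=\tfrac12|p-x_0|^2$ restricted to $\Sigma$. A direct computation using that $\Sigma$ is minimal in $\R^3$ gives $\Delta_\Sigma\rho=2$, and integrating this over $\Sigma$ yields
\begin{eqnarray*}
2A(\Sigma)=\int_\Sigma\Delta_\Sigma\rho\,d\sigma=\int_{\partial\Sigma}\frac{\partial\rho}{\partial\nu}\,ds=\int_{\partial\Sigma}\langle p-x_0,\nu\rangle\,ds.
\end{eqnarray*}
Now I would bound the integrand: along $\partial\Sigma\subset\partial\Omega$ we have $|p-x_0|\le\delta$, and $\nu$ is a unit vector, so $\langle p-x_0,\nu\rangle\le|p-x_0|\le\delta$. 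Therefore
\begin{eqnarray*}
2A(\Sigma)\le\delta\, L(\partial\Sigma)\le\delta\cdot 2\pi(g+r),
\end{eqnarray*}
and letting $\delta\to\mathcal R(\Omega)$ gives $A(\Sigma)\le\pi(g+r)\mathcal R(\Omega)$, which is (\ref{eq.General.Area.Estimate}).

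For the rigidity statement, suppose equality holds in (\ref{eq.General.Area.Estimate}). Then every inequality in the chain above must be an equality. First, $L(\partial\Sigma)=2\pi(g+r)$, so Corollary \ref{corollary.1} applies and already forces $\Omega$ to be the Euclidean unit $3$-ball and $\Sigma$ to be the Euclidean unit disk; this is the cleanest route, since the length saturation is exactly the hypothesis of Corollary \ref{corollary.1}. (Consistency check: for the unit ball one has $\mathcal R(\Omega)=1$, and the equality case gives $A(\Sigma)=\pi(g+r)=\pi$, matching the area $\pi$ of the unit disk with $g=0,r=1$.) I expect the main obstacle to be purely bookkeeping rather than conceptual: one must take the infimum over centers $y\in\Omega$ carefully, justify that a minimizing or near-minimizing center may be chosen in the interior so that the comparison ball genuinely contains $\partial\Omega$, and confirm that the limit $\delta\to\mathcal R(\Omega)$ is legitimate given that the surface and its boundary length are fixed independently of $\delta$. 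Everything else follows by reduction to the already-established Corollary \ref{corollary.1}.
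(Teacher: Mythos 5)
Your proposal is correct and follows essentially the same argument as the paper: integrate $\Delta_\Sigma\bigl(\tfrac12|x-y_0|^2\bigr)=2$ over $\Sigma$, bound $\langle x-y_0,\nu\rangle$ by the distance to a (near-)optimal center, invoke the length bound $L(\partial\Sigma)\le2\pi(g+r)$ from Theorem \ref{theorem.1}, and reduce the equality case to Corollary \ref{corollary.1}. The only difference is cosmetic: the paper picks a center $y_0$ attaining $\mathcal{R}(\Omega)$ outright, while you use near-minimizing centers and pass to the limit, which sidesteps the (minor) attainment question.
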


\begin{proof}
Let $y_0\in\Omega$ be such that $\sup_{x\in\partial\Omega}|x-y_0|=\mathcal{R}(\Omega)$. Define $f:
\Sigma\to\R$ by $f(x)=\frac{1}{2}|x-y_0|^2$. Since $\Sigma$ is minimal, we have $\Delta f=2$. Then, 
\begin{eqnarray*}
2A(\Sigma)&=&\int_\Sigma\Delta f d\sigma=\int_{\partial\Sigma}\frac{\partial f}{\partial\nu}ds\\
&=&\int_{\partial\Sigma}\langle x-y_0,\nu\rangle ds\le\int_{\partial\Sigma}|x-y_0|ds\\
&\le &\mathcal{R}(\Omega)L(\partial\Sigma).
\end{eqnarray*}
Therefore, using $L(\partial\Sigma)\le2\pi(g+r)$ into the last inequality above, we get (\ref{eq.General.Area.Estimate}).

Now, if $A(\Sigma)=\pi(g+r)\mathcal{R}(\Omega)$, then $L(\partial\Sigma)=2\pi(g+r)$. The result follows from Corollary \ref{corollary.1}.
\end{proof}

\section{Free boundary stable CMC surfaces}\label{Section.3}

In this section we obtain a similar result to Theorem \ref{theorem.1} for free boundary stable constant mean curvature (CMC) surfaces and some of its consequences.

As before, let $M^3$ be a compact Riemannian $3$-manifold with nonempty boundary $\partial M$. Assume that $\Ric\ge0$ and $\II>0$. Also, let $\Sigma^2$ be a properly embedded compact surface in $M^3$ with nonempty boundary $\partial\Sigma$. We say that $\Sigma$ is stationary if it is a critical point for the area functional for variations that preserve the property $\Sigma\cap\partial M=\partial\Sigma$ and are volume-preserving (see \cite{RosVergasta}). Equivalently, $\Sigma$ is stationary if it has constant mean curvature and is free boundary. A free boundary CMC surface $\Sigma$ is called stable if its second variation of area is nonnegative for variations as before, which is equivalent to saying that 
\begin{eqnarray}\label{eq.7}
\I(\phi,\phi)\ge0,
\end{eqnarray}
for all $\phi\in C^\infty(\Sigma)$ satisfying $\int_\Sigma\phi d\sigma=0$.

Our result is the following.

\begin{theorem}\label{theorem.2}
Let $M^3$ be a compact Riemannian 3-manifold with nonempty boundary $\partial M$. Suppose that $\Ric\ge0$ and $\II\ge1$. If $\Sigma^2$ is a properly embedded free boundary stable CMC surface in $M^3$, then the length of $\partial\Sigma$ satisfies
\begin{eqnarray}\label{eq.theorem.2}
L(\partial\Sigma)\le2\pi(g+r),
\end{eqnarray}
where $g$ is the genus of $\Sigma$ and $r$ is the number of connected components of $\partial\Sigma$. Moreover, if equality holds, we have:
\begin{enumerate}
\item[\rm (i)] $\Sigma$ (w.r.t. the induced metric from $M$) is isometric to the Euclidean unit disk $\bar\D$;
\item[\rm (ii)] $\partial\Sigma$ is a geodesic of $\partial M$;
\item[\rm (iii)] $\Sigma$ is totally geodesic in $M$; and
\item[\rm (iv)] all sectional curvatures of $M$ vanish on $\Sigma$.
\end{enumerate}
\end{theorem}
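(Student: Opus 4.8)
The plan is to reproduce the proof of Theorem \ref{theorem.1} almost verbatim, making a single structural substitution: the role played there by the first eigenfunction $\phi_1$ of $\I$ is now played by the constant function $1$. Indeed, the index form $\I$ of a free boundary CMC surface has exactly the same expression as in the minimal case, and the stability hypothesis (\ref{eq.7}) says precisely that $\I(f,f)\ge0$ for every $f\in C^\infty(\Sigma)$ with $\int_\Sigma f\,d\sigma=0$, that is, for every $f$ that is $L^2$-orthogonal to the nonnegative constant function $1$. This is the exact analogue of the condition ``$\I(f,f)\ge0$ whenever $\int_\Sigma f\phi_1\,d\sigma=0$'' used for an index-one minimal surface.

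First I would apply Gabard's theorem (\cite[Theorem 7.2]{Gabard}) to obtain a proper conformal branched cover $F=(f_1,f_2):\Sigma\to\bar\D$ with $\deg(F)\le g+r$. Then I would invoke Lemma \ref{lemma.Hersch} with $\phi_1\equiv1$: since $1$ is continuous and nonnegative and $F(\Sigma\setminus\partial\Sigma)\subset\D$, the lemma furnishes $h\in\Aut(\bar\D)$ with $\int_\Sigma(h\circ F)\,d\sigma=0$. Postcomposing with a disk automorphism preserves properness and conformality and leaves the degree unchanged, so after replacing $F$ by $h\circ F$ I may assume $\int_\Sigma f_i\,d\sigma=0$ for $i=1,2$. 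Each $f_i$ is then an admissible test function for (\ref{eq.7}).

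Next I would feed $f_1,f_2$ into the stability inequality and run the estimate exactly as in Theorem \ref{theorem.1}: discarding the nonnegative term $(\Ric(N,N)+|A|^2)f_i^2$ and using $\II(N,N)\ge1$ yields $0\le\int_\Sigma|\nabla f_i|^2\,d\sigma-\int_{\partial\Sigma}f_i^2\,ds$, and summing over $i$, together with $f_1^2+f_2^2\equiv1$ on $\partial\Sigma$ (as $F(\partial\Sigma)\subset\s^1$) and the conformal identity $\sum_i\int_\Sigma|\nabla f_i|^2\,d\sigma=2\int_\Sigma dF^*g_{\can}=2\pi\deg(F)$, gives $0\le2\pi\deg(F)-L(\partial\Sigma)\le2\pi(g+r)-L(\partial\Sigma)$, which is (\ref{eq.theorem.2}).

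For the rigidity statement the argument is word-for-word that of Theorem \ref{theorem.1}. Equality forces every inequality in the chain to be an equality, hence $A\equiv0$ (so $H=\tr A=0$ and $\Sigma$ is totally geodesic, which is (iii)), $\Ric(N,N)=0$ on $\Sigma$, and $\II(N,N)=1$ along $\partial\Sigma$; the Gauss equation then gives $2K=R\ge0$, the free boundary condition gives $\kappa=\II(T,T)\ge1$ for the geodesic curvature of $\partial\Sigma$ in $\Sigma$, and Gauss-Bonnet forces $2\ge3g+2r$, i.e.\ $g=0$, $r=1$, $K\equiv0$, $\kappa\equiv1$, giving (i); finally $\bar\kappa=A(T,T)=0$ gives (ii), and the pointwise Ricci-vanishing argument gives (iv). I expect no real obstacle: the only genuinely new point is the observation in the first paragraph, and once $1$ is recognized as the correct replacement for $\phi_1$ every subsequent step carries over unchanged. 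The one thing worth double-checking is that $F$, and hence each $f_i$, is smooth enough at the branch points to be used as a test function in (\ref{eq.7}), but this is standard for conformal branched covers.
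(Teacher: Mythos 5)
Your proposal is correct and follows essentially the same route as the paper: the paper's proof likewise feeds the components of Gabard's branched cover, balanced via Lemma \ref{lemma.Hersch} applied with the constant function in place of $\phi_1$, into the stability inequality (\ref{eq.7}), and then handles the equality case exactly as in Theorem \ref{theorem.1}. Your identification of the constant function $1$ as the replacement for the first eigenfunction is precisely the observation the paper relies on.
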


\begin{proof}
Since $\Sigma$ is stable, by (\ref{eq.7}), we have 
\begin{eqnarray*}
\int_\Sigma\{|\nabla f|^2-(\Ric(N,N)+|A|^2)f^2\}d\sigma-\int_{\partial\Sigma}\II(N,N)f^2ds\ge0
\end{eqnarray*}
for all $f\in C^\infty(\Sigma)$ satisfying $\int_\Sigma fd\sigma=0$. Let $F=(f_1,f_2):\Sigma\to\bar\D$ be a proper conformal branched cover as in the proof of Theorem \ref{theorem.1}. Using Lemma \ref{lemma.Hersch}, we can assume $\int_\Sigma f_id\sigma=0$. Then, 
\begin{eqnarray*}
0&\le&\sum_{i=1}^2\left\{\int_\Sigma\{|\nabla f_i|^2-(\Ric(N,N)+|A|^2)f_i^2\}d\sigma-\int_{\partial\Sigma}\II(N,N)f_i^2ds\right\}\\
&\le&2\pi(g+r)-L(\partial\Sigma),
\end{eqnarray*}
which proves (\ref{eq.theorem.2}). 
 
If equality holds, working exactly as in the proof of Theorem \ref{theorem.1}, we have the result.
\end{proof} 

The first consequence of Theorem \ref{theorem.2} is the following.

\begin{corollary}
Let $M^3$ be a compact Riemannian 3-manifold with nonempty boundary $\partial M$. Suppose that $\Ric\ge0$, $\II\ge1$, and $K_M(T_p\partial M)\ge0$ for all $p\in\partial M$, where $K_M$ is the sectional curvature of $M$. If $\Sigma^2$ is a properly embedded free boundary stable CMC surface in $M^3$, then the length of $\partial\Sigma$ satisfies
\begin{eqnarray*}
L(\partial\Sigma)\le2\pi(g+r).
\end{eqnarray*}
Furthermore, if equality holds, $M^3$ is isometric to the Euclidean unit 3-ball $\bar B^3$ and $\Sigma^2$ is isometric to the Euclidean unit disk $\bar\D$.
\end{corollary}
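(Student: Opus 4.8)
The plan is to transcribe the proof of Corollary \ref{corollary.1}, replacing every appeal to Theorem \ref{theorem.1} by the corresponding appeal to Theorem \ref{theorem.2}; the hypotheses and intermediate geometric facts are identical, and only the index-one assumption has been weakened to stability. The length inequality $L(\partial\Sigma)\le2\pi(g+r)$ is already the content of Theorem \ref{theorem.2}, so nothing new is needed there.

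For the rigidity part, I would first record that the curvature hypotheses force $\partial M$ to have Gaussian curvature at least $1$: writing $k_1,k_2$ for the principal curvatures of $\partial M$ in $M$, the condition $\II\ge1$ gives $k_1k_2\ge1$, and the Gauss equation $K_{\partial M}=K_M(T_p\partial M)+k_1k_2$ together with $K_M(T_p\partial M)\ge0$ yields $K_{\partial M}\ge1$. Next, assuming equality $L(\partial\Sigma)=2\pi(g+r)$, I would invoke the equality clause of Theorem \ref{theorem.2}: conclusion (i) gives $\Sigma\cong\bar\D$, forcing $g=0$ and $r=1$ and hence $L(\partial\Sigma)=2\pi$, while conclusion (ii) gives that $\partial\Sigma$ is a geodesic of $\partial M$. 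Since $\Sigma$ is embedded, $\partial\Sigma$ is a \emph{simple} closed geodesic of $\partial M$ of length exactly $2\pi$. Because $K_{\partial M}\ge1$, Toponogov's theorem then forces $\partial M$ to be isometric to the round unit sphere $\s^2$, so that $\lambda_1(\partial M)=2$. Finally, Xia's theorem with $n=2$ and $c=1$ gives $\lambda_1\ge2$ with equality only on the Euclidean unit $3$-ball; since $\lambda_1(\partial M)=2$, we conclude $M^3\cong\bar B^3$, while $\Sigma^2\cong\bar\D$ was already obtained.

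The only genuine point to verify—rather than an obstacle—is that the stable-CMC equality analysis of Theorem \ref{theorem.2} truly yields conclusions (i) and (ii), in particular that equality forces $A\equiv0$, so that $\Sigma$ is in fact minimal and the Gauss--Bonnet step pinning down $g=0$ and $r=1$ carries over verbatim from the minimal case. Granting Theorem \ref{theorem.2} as stated, everything else is a formal repetition of the argument in Corollary \ref{corollary.1}.
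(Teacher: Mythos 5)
Your proposal is correct and follows essentially the same route the paper intends: the paper states this corollary without a separate proof precisely because it is the argument of Corollary \ref{corollary.1} verbatim, with Theorem \ref{theorem.2} substituted for Theorem \ref{theorem.1} (Gauss equation to get $K_{\partial M}\ge1$, the equality clause to get $\Sigma\cong\bar\D$ and $\partial\Sigma$ a simple closed geodesic of length $2\pi$, then Toponogov and Xia). Your added remark that one must check the stable-CMC equality analysis still forces $A\equiv0$ is exactly the point the paper handles inside Theorem \ref{theorem.2} itself, so granting that theorem as stated, your proof is complete.
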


Below we have a characterization of the Euclidean unit 3-ball by the length of the boundary of properly embedded free boundary stable CMC disks in it.

\begin{corollary}
The only smooth bounded domain $\Omega\subset\R^3$, with boundary $\partial\Omega$ satisfying $\II\ge1$, which admits
a properly embedded free boundary stable CMC disk $\Sigma^2\subset\Omega$ with $L(\partial\Sigma)=2\pi$ is the unit ball.
\end{corollary}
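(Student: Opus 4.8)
The plan is to recognize this statement as the equality case of the CMC analogue of Corollary \ref{corollary.1}, with a domain in $\R^3$ being the ideal flat setting for those hypotheses. First I would observe that $\Omega\subset\R^3$ is flat, so its Ricci tensor vanishes identically (in particular $\Ric\ge0$) and every ambient sectional curvature vanishes; thus $K_M(T_p\partial\Omega)=0\ge0$ for all $p\in\partial\Omega$, and the hypothesis $\II\ge1$ is assumed. Since $\Sigma$ is a disk, it has genus $g=0$ and exactly $r=1$ boundary component, so $2\pi(g+r)=2\pi$. Hence the assumption $L(\partial\Sigma)=2\pi$ places us precisely in the equality case of Theorem \ref{theorem.2}.

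Next I would extract the rigidity conclusions. By the equality statement of Theorem \ref{theorem.2}, $\Sigma$ is isometric to the Euclidean unit disk $\bar\D$ and $\partial\Sigma$ is a geodesic of $\partial\Omega$. Because $\Sigma$ is embedded, $\partial\Sigma$ is a simple closed geodesic of $\partial\Omega$ of length $2\pi$. The strict convexity $\II\ge1$ forces both principal curvatures $k_1,k_2$ of $\partial\Omega$ in $\R^3$ to be at least $1$, so by the Gauss equation $K_{\partial\Omega}=k_1k_2\ge1$ (the ambient sectional curvature being zero). Applying Toponogov's theorem to the closed surface $\partial\Omega$, which carries a simple closed geodesic realizing the extremal length $2\pi$, I conclude that $\partial\Omega$ is isometric to the standard unit $2$-sphere $\s^2$.

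Finally I would upgrade this intrinsic rigidity of $\partial\Omega$ to extrinsic rigidity of $\Omega$. The first nonzero Laplace eigenvalue of the round unit sphere is $\lambda_1(\s^2)=2$, which equals $nc^2=2\cdot1^2$ for $n=2$ and $c=1$. Since $\Ric\ge0$ and $\II\ge1$, Xia's theorem then forces equality in its eigenvalue bound, hence $\Omega$ is isometric to the Euclidean unit $3$-ball $\bar B^3$. A bounded flat domain in $\R^3$ that is Riemannian-isometric to $\bar B^3$ is necessarily a rigid-motion image of $\bar B^3$, since an isometry between connected Euclidean domains extends to an ambient isometry of $\R^3$; therefore $\Omega$ is the unit ball.

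The main obstacle is this last passage: the earlier steps yield only an abstract (intrinsic) isometry $\partial\Omega\cong\s^2$, and converting it into a statement about $\Omega$ as a subset of $\R^3$ requires the eigenvalue rigidity of Xia's theorem together with the observation that flatness makes the abstract isometry with $\bar B^3$ be realized by a congruence. Everything preceding it is an essentially verbatim specialization of the proof of Corollary \ref{corollary.1} to the flat ambient case, with Theorem \ref{theorem.2} replacing Theorem \ref{theorem.1}.
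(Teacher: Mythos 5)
Your proof is correct and follows essentially the same route as the paper: the statement is the equality case of the CMC analogue of Corollary \ref{corollary.1} (the unnamed corollary immediately preceding it), specialized to a flat domain where $\Ric\equiv0$ and all ambient sectional curvatures vanish, and your chain — equality in Theorem \ref{theorem.2}, then Gauss equation, then Toponogov applied to the simple closed geodesic $\partial\Sigma\subset\partial\Omega$, then Xia's eigenvalue rigidity — is exactly the argument the paper uses (via the proof of Corollary \ref{corollary.1}). Your closing observation, that the abstract isometry $\Omega\cong\bar B^3$ between flat domains is realized by a rigid motion of $\R^3$, so that $\Omega$ literally \emph{is} the unit ball, is left implicit in the paper but is a correct and worthwhile clarification.
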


Corollaries \ref{corollary.2} and \ref{corollary.3} are also true if we change the hypothesis ``minimal of index one'' by ``stable CMC and minimal''.

\begin{remark}
In \cite{RosVergasta}, Ros and Vergasta observed that the only free boundary minimal surfaces of index one in the ball $\bar B^3$ is the totally geodesic disks passing through the center of the ball. Also, they proved that the only free boundary stable CMC surfaces in $\bar B^3$ are the totally geodesic disks, the spherical caps or surfaces of genus 1 with embedded boundary having at most two connected components. Recently, Nunes \cite{Nunes2016} ruled out the existence of the latter kind of surfaces, i.e., he proved, among other things, that a stationary stable surface in $\bar B^3$ must have genus zero.
\end{remark}

\begin{remark}
By \cite{ChenFraserPang}, under the same assumptions of Theorem \ref{theorem.1}, $g=0,1$ and $r=1,2,3$ or $g=2,3$ and $r=1$ (in fact, it is enough $\partial M$ to be weakly convex). In the case of smooth bounded domains in $\R^3$ with strictly convex boundary, it follows from the index estimates obtained in \cite{AmbrozioCarlottoSharp} and \cite{Sargent} that $\ind(\Sigma)=1$ implies $g=0$ (and $r=1,2,3,4$) or $g=1$ and $r=1,2$. In the same case, applying the techniques of \cite{Nunes2016} for minimal surfaces of index one instead of free boundary stable CMC surfaces, we can see that $g=0,1$ if $\ind(\Sigma)=1$.
\end{remark}

\appendix

\section{Proof of Lemma {\ref{lemma.Hersch}}}

If $\phi_1=0$, the result is trivial. Then, without loss of generality, we can assume $\int_\Sigma\phi_1d\sigma=1$. Let $m_a\in\Aut(\bar\D)$ be given by 
\begin{eqnarray*}
m_a(z)=\frac{z-a}{1-\bar az},\,\,z\in\bar\D\subset\C,
\end{eqnarray*}
for each $a\in\D$. Define $f:\D\to\C$ by 
\begin{eqnarray*}
f(a)&=&\int_{\Sigma\setminus\partial\Sigma}(m_a\circ F)\phi_1d\sigma.
\end{eqnarray*}
It follows from Lebesgue dominated convergence theorem that $f$ is continuous. Now, we want to extend $f$ to $\bar\D$ continuously. For this purpose, first observe that, if $a\in\s^1$, 
\begin{eqnarray*}
\frac{z-a}{1-\bar az}=\frac{z-a}{a^{-1}(a-z)}=-a,
\end{eqnarray*} for all $z\in\D$. Then,
\begin{eqnarray*}
\int_{\Sigma\setminus\partial\Sigma}\frac{F-a}{1-\bar aF}\phi_1d\sigma=-a\int_{\Sigma\setminus\partial\Sigma}\phi_1d\sigma=-a,
\end{eqnarray*} 
where above we have used that $F(\Sigma\setminus\partial\Sigma)\subset\D$. Second, if $a_n\longrightarrow a\in\s^1$ with $a_n\in\D\setminus\{0\}$, we have 
\begin{eqnarray*}
m_{a_n}(z)=\frac{z-a_n}{1-\bar a_nz}=\frac{z-a_n}{a_n^{-1}(a_n-|a_n|^2z)}\longrightarrow\frac{z-a}{a^{-1}(a-z)}=-a,
\end{eqnarray*} 
for all $z\in\D$. Then, defining $f(a)=-a$ for $a\in\s^1$, by Lebesgue dominated convergence theorem, $f:\bar\D\to\C$ is continuous.

Now, observe that $|f(a)|\le 1$ for all $a\in\bar\D$. Then, $f:\bar\D\to\bar\D$ is a continuous function satisfying $f(a)=-a$ for $a\in\s^1$. Thus, by topological reasons, $f$ is onto. Therefore, there exists $a_0\in\D$ such that $f(a_0)=0$. Take $h=m_{a_0}$.

\bibliographystyle{amsplain}
\bibliography{bibliography.bib}

\end{document}